\newcommand{\logLogSlopeTriangle}[6]
{

    \pgfplotsextra
    {
        \pgfkeysgetvalue{/pgfplots/xmin}{\xmin}
        \pgfkeysgetvalue{/pgfplots/xmax}{\xmax}
        \pgfkeysgetvalue{/pgfplots/ymin}{\ymin}
        \pgfkeysgetvalue{/pgfplots/ymax}{\ymax}

        \pgfmathsetmacro{\xArel}{#1}
        \pgfmathsetmacro{\yArel}{#3}
        \pgfmathsetmacro{\xBrel}{#1-#2}
        \pgfmathsetmacro{\yBrel}{\yArel}
        \pgfmathsetmacro{\xCrel}{\xArel}

        \pgfmathsetmacro{\lnxB}{\xmin*(1-(#1-#2))+\xmax*(#1-#2)} 
        \pgfmathsetmacro{\lnxA}{\xmin*(1-#1)+\xmax*#1} 
        \pgfmathsetmacro{\lnyA}{\ymin*(1-#3)+\ymax*#3} 
        \pgfmathsetmacro{\lnyC}{\lnyA+#4*(\lnxA-\lnxB)}
        \pgfmathsetmacro{\yCrel}{\lnyC-\ymin)/(\ymax-\ymin)} 

        \coordinate (A) at (rel axis cs:\xArel,\yArel);
        \coordinate (B) at (rel axis cs:\xBrel,\yCrel);
        \coordinate (C) at (rel axis cs:\xCrel,\yCrel);

        \draw[#5]   (A)--
                    (B)-- 
                    (C)-- node[pos=0.5,anchor=west] {#6}
                    cycle;
    }
}
\newcommand{\logLogSlopeTriangleBelow}[6]
{

    \pgfplotsextra
    {
        \pgfkeysgetvalue{/pgfplots/xmin}{\xmin}
        \pgfkeysgetvalue{/pgfplots/xmax}{\xmax}
        \pgfkeysgetvalue{/pgfplots/ymin}{\ymin}
        \pgfkeysgetvalue{/pgfplots/ymax}{\ymax}

        \pgfmathsetmacro{\xArel}{#1}
        \pgfmathsetmacro{\yArel}{#3}
        \pgfmathsetmacro{\xBrel}{#1-#2}
        \pgfmathsetmacro{\yBrel}{\yArel}
        \pgfmathsetmacro{\xCrel}{\xArel}

        \pgfmathsetmacro{\lnxB}{\xmin*(1-(#1-#2))+\xmax*(#1-#2)} 
        \pgfmathsetmacro{\lnxA}{\xmin*(1-#1)+\xmax*#1} 
        \pgfmathsetmacro{\lnyA}{\ymin*(1-#3)+\ymax*#3} 
        \pgfmathsetmacro{\lnyC}{\lnyA+#4*(\lnxA-\lnxB)}
        \pgfmathsetmacro{\yCrel}{\lnyC-\ymin)/(\ymax-\ymin)} 

        \coordinate (A) at (rel axis cs:\xArel,\yArel);
        \coordinate (B) at (rel axis cs:\xBrel,\yCrel);
        \coordinate (C) at (rel axis cs:\xBrel,\yArel);

        \draw[#5]   (A)--
                    (B)-- node[pos=0.5,anchor=east] {#6}
                    (C)-- 
                    cycle;
    }
}
\let\div\relax
\DeclareMathOperator{\div}{\mathrm{div}}
\renewcommand{\d}{\,{\rm d}}
\newcommand{\const}[1]{C_{\text{\rm#1}}}
\newcommand{\set}[2]{\big\{#1\,:\,#2\big\}}
\newcommand{\dual}[3][]{#1\langle#2\,,\,#3#1\rangle}
\newcommand{\norm}[3][]{#1\|#2#1\|_{#3}}
\newcommand\A{\mathbb{A}}
\newcommand\R{\mathbb{R}}
\newcommand\OO{{\mathcal O}}
\newcommand\out[1]{}
\numberwithin{equation}{section}
\numberwithin{figure}{section}
\newtheorem{theorem}{Theorem}[section]
\newtheorem{proposition}[theorem]{Proposition}
\newtheorem{lemma}[theorem]{Lemma}
\newtheorem{remark}[theorem]{Remark}
\newcommand*\patchAmsMathEnvironmentForLineno[1]{%
  \expandafter\let\csname old#1\expandafter\endcsname\csname #1\endcsname
  \expandafter\let\csname oldend#1\expandafter\endcsname\csname end#1\endcsname
  \renewenvironment{#1}%
     {\linenomath\csname old#1\endcsname}%
     {\csname oldend#1\endcsname\endlinenomath}}%
\newcommand*\patchBothAmsMathEnvironmentsForLineno[1]{%
  \patchAmsMathEnvironmentForLineno{#1}%
  \patchAmsMathEnvironmentForLineno{#1*}}%
\renewcommand{\A}{{\boldsymbol{A}}}
\newcommand{\n}{{\boldsymbol{n}}}
\renewcommand{\u}{{\boldsymbol{u}}}
\renewcommand{\v}{{\boldsymbol{v}}}
\newcommand{\x}{{\boldsymbol{x}}}
\newcommand{\y}{{\boldsymbol{y}}}
\newcommand{\ssigma}{{\boldsymbol{\sigma}}}
\newcommand{\ttau}{{\boldsymbol{\tau}}}
\title{Space-time FEM-BEM couplings\\ for parabolic transmission problems}
\author{Thomas~F\"uhrer}
\address{Facultad de Matemáticas, Pontificia Universidad Católica de Chile, Santiago, Chile}
\email{thfuhrer@uc.cl}
\author{Gregor~Gantner}
\address{Institute for Numerical Simulation, University of Bonn, Bonn, Germany}
\email{gantner@ins-uni.bonn.de}
\author{Michael~Karkulik}
\address{Departamento de Matemática, Universidad Técnica Federico Santa María, Valparaíso, Chile}
\email{michael.karkulik@usm.cl}
\keywords{parabolic transmission problem, space-time method, coupling, least-squares, boundary element method}
\subjclass[2010]{35K20, 65M12, 65M60, 65N38}
\begin{document}

\begin{abstract}
We develop couplings of a recent space-time first-order system least-squares (FOSLS) method for parabolic problems and space-time boundary element methods (BEM) for the heat equation 
to numerically solve a parabolic transmission problem on the full space and a finite time interval. 
In particular, we demonstrate coercivity of the couplings under certain restrictions and validate our theoretical findings by numerical experiments.
\end{abstract}

\date{\today}
\maketitle

\section{Introduction}

\subsection{Model problem}
For $d\in\{2,3\}$, let $\Omega\subset\R^d$ be a bounded Lipschitz domain with connected boundary $\Gamma$ and $T>0$ a given
final time with corresponding time interval $I:=(0,T)$.
We abbreviate the space-time cylinder by $Q:=I\times\Omega$, and denote its lateral boundary
by $\Sigma:=I\times\Gamma$. The corresponding outward normal vector on $\Sigma$ is denoted by
$\n^\top=(0,\n_\x^\top)\in\R^{d+1}$. 
Moreover, let $\Omega^{\rm ext}:=\R^d\setminus\overline\Omega$ and $Q^{\rm ext}:=I\times\Omega^{\rm ext}$ be the corresponding exterior domains. 
For $\A\in L^\infty(Q)^{d\times d}$ symmetric uniformly positive definite and given data $f: Q \to \R$, $u_0: \Omega \to \R$, $u_D: \Sigma \to \R$, and $\phi_N: \Sigma \to \R$,
whose properties will be specified below, we consider the parabolic transmission problem to find $u:Q\rightarrow\R$ and
$u^{\rm ext}:Q^{\rm ext}\rightarrow\R$ such that
\begin{subequations} \label{eq:transmission}
\begin{alignat}{2} 
	\partial_t u - \div_\x (\A\nabla_\x u) & =  f &\quad& \text{ in } Q, \label{eq:heat_interior}\\
	\partial_t u^{\rm ext} - \Delta_\x u^{\rm ext} & = 0 && \text{ in } Q^{\rm ext}, \label{eq:heat_exterior}\\
	u(0,\cdot) & =  u_0 && \text{ in }\Omega, \label{eq:initial_interior}\\
	u^{\rm ext}(0,\cdot) & =  0 && \text{ in }\Omega^{\rm ext}, \label{eq:initial_exterior}\\
	u - u^{\rm ext} & =  u_D && \text{ on }\Sigma, \label{eq:jump_dirichlet}\\
	(\A\nabla_\x u - \nabla_\x u^{\rm ext}) \cdot \n_\x & =  \phi_N && \text{ on }\Sigma \label{eq:jump_neumann},\\
	u^{\rm ext}(\cdot,\x) &\to 0  &&\text{ on } I \text{ as }|\x|\to\infty \label{eq:radiation}.
\end{alignat}
\end{subequations}
Here, the convergence in~\eqref{eq:radiation} is to be understood uniformly for $t\in I$ and $\x$ with $|\x|\to\infty$.

\subsection{Motivation and existing works}
Problem~\eqref{eq:transmission} models, for instance, heat conduction in a heterogeneous body
$\Omega$ that radiates heat into the surrounding homogeneous medium~$\Omega^{\rm ext}$. 
In this context, the unbounded domain $\Omega^{\rm ext}$ can of course be viewed as a simplification of a surrounding \emph{bounded} $\widetilde\Omega$ that is substantially larger than $\Omega$, 
and we refer for instance to \cite{cz98,hllz13,fr17,guo21,bdfgl24} for parabolic transmission problems on bounded domains.
The work~\cite{rs07} provides a concrete interpretation of~\eqref{eq:transmission} in terms of photothermal
spectroscopy. 
Other works that investigate the heat equation on unbounded domains include for instance~\cite{dubach96,ck97,hh02,lg07,sd11,rsm20,zx23}. 
Assuming that $\A$ is scalar and constant on $Q$, $f=0$ on $Q$, and $u_0=0$ on $\Omega$,
\cite{qrsz19} discretize \eqref{eq:transmission} via a boundary element method (BEM) in the interior
and the exterior, and the resulting semidiscrete system is solved by
convolution quadrature \cite{lubich88,ls92,lo93} in time.
In the PhD thesis~\cite{dohr19}, a coupling of the space-time finite element method (FEM)~\cite{sz20} based on some Hilbert-type transformation and space-time BEM~\cite{an87,noon88,costabel90} is proposed, and a numerical example for $d=1$ is presented. 
However, no numerical analysis of the scheme is given, and it is unclear whether the resulting discrete systems are inf-sup stable or even uniquely solvable. 
A provably stable numerical scheme that approximately solves~\eqref{eq:transmission} in its general form has been
missing in the literature up to now.
It is the objective of this manuscript to introduce and analyze such a method.
\medskip

A class of problems related to~\eqref{eq:transmission}, so-called parabolic-elliptic
transmission problems in which \eqref{eq:heat_exterior} and \eqref{eq:radiation} for $u^{\rm ext}$ are
replaced by the Laplace equation with appropriate radiation condition,
have been intensely studied, e.g., in \cite{ms87,ces90,costabel90,ees18,es20}. 
As a matter of fact, \cite{ms87} models electromagnetic eddy-current problems as
parabolic-hyperbolic transmission problems, where \eqref{eq:heat_exterior} is actually replaced
by a wave equation with small, potentially negligible, coefficient in front of the
involved second time derivative of $u^{\rm ext}$.

For transmission problems with stationary elliptic PDEs in the interior and the exterior, FEM-BEM coupling is
a well-understood numerical method,
see, e.g., the overview article~\cite{affkmp13} and the references therein. 
We only mention the recent works~\cite{fhk17,fh17}, which couple a discontinuous Petrov--Galerkin (DPG) method with standard as well as least-squares boundary element methods. 
Indeed, the couplings for the parabolic transmission problem~\eqref{eq:transmission} which we will introduce in the present manuscript were inspired by the couplings of those works. 

\subsection{Proposed couplings}
We first reformulate the interior problem \eqref{eq:heat_interior}\& \eqref{eq:initial_interior} as first-order system by substituting $\ssigma:=-\A\nabla_\x u$. 
For the heat equation, i.e., $\A={\bf Id}$, it has only recently been shown in~\cite{fk21}
that the resulting first-order system least-squares (FOSLS) formulation,
equipped with homogeneous Dirichlet boundary conditions for $u$, is well posed. 
This result was extended to general diffusion matrices $\A$
and inhomogeneous Dirichlet and/or Neumann boundary conditions in~\cite{gs21}.
The results of~\cite{gs21} will be crucially exploited in the present work, but
we also mention the recent generalizations \cite{gs22,fgk23} to the
instationary Stokes problem and the wave equation, respectively.

In a second step, we couple the FOSLS formulations \cite{fk21,gs21} of the interior problem
\eqref{eq:heat_interior}\&\eqref{eq:initial_interior} via the jump
conditions~\eqref{eq:jump_dirichlet}\&\eqref{eq:jump_neumann} with boundary integral
formulations \cite{an87,noon88,costabel90} of the exterior problem
\eqref{eq:heat_exterior}\&\eqref{eq:initial_exterior}\&\eqref{eq:radiation}.
While the exterior problem is posed on the \emph{unbounded} $(d+1)$-dimensional set $Q^{\rm ext}$,
its boundary integral reformulation is now posed on the \emph{bounded} $d$-dimensional set $\Sigma$.
We consider couplings of the FOSLS formulations to three different
boundary integral formulations of the exterior problem: (i) the weakly-singular integral equation
(with unknown Neumann data), (ii) the hypersingular integral equation (with unknown Dirichlet data),
and (iii) the sum of the two preceding equations.
Similar boundary integral formulations are also possible for the heat equation in the interior
in case of homogeneous source term $f=0$ and homogeneous initial data $u_0=0$,
which again yields a dimension reduction by one. 
Besides this dimension reduction, the space-time boundary integral equations
of~\cite{an87,noon88,costabel90} have the advantage that the corresponding variational
formulations are \emph{always} coercive.

We prove coercivity of the resulting variational formulations of the three coupling methods,
but we require the restriction $\A={\bf Id}$ for the first two of them.
The reason for this is that we exploit a specific relation between the Dirichlet and Neumann
data from the interior and the exterior problem in terms of boundary integral operators, which is
only valid if we consider the same PDE in the interior and the exterior. 
In the stationary case, \cite{sayas09,steinbach11} removed this restriction for the
Johnson--N\'ed\'elec coupling, allowing for spatial diffusion matrices in the 
interior with minimal eigenvalue strictly larger than $1/4$. 
The latter can even be slightly improved, cf.~\cite{os13}.
A generalization to problem~\eqref{eq:transmission} and the FEM-BEM couplings considered here
is not straight-forward. 
However, no such restriction is required for the third coupling involving all
four boundary integral operators of the weakly-singular and the hypersingular equation. 
For all three coupling methods, the proof of coercivity requires the FOSLS part
to be weighted with a sufficiently large positive number $\alpha>0$ whose size is in principle unknown. 
That being said, in our numerical experiments (for the first coupling), we always choose
$\alpha=1$ without encountering any issues. We also present numerical experiments showing
the effect of $\alpha$ on the coercivity of the resulting discrete systems.
In these experiments on the unit square, only a small weighting of the order of $0.1$ or less results in a Galerkin matrix which is not positive definite.

\subsection{Space-time discretization}
The coupling methods presented in this work are \emph{coercive} and fit thus into the setting 
of the Lax--Milgram lemma. Hence, in contrast to time-stepping methods as~\cite{qrsz19}, 
they provide quasi-optimal approximations of the solution $(u,\ssigma) = (u,-\A\nabla_\x u)$ for
\emph{arbitrary} discrete trial spaces. (The solution in the exterior $u^{\rm ext}$ can be
computed in a post-processing step via a representation formula.)
Regarding discretizations of the FOSLS methods \cite{fk21,gs21} for the interior problem with homogeneous Dirichlet boundary conditions,
  standard Lagrange elements on simplicial meshes of the full space-time cylinder $Q$ can lead
  to disappointingly low convergence rates, cf.~\cite{fk21}. However,~\cite{gs24} proposes
  discrete trial spaces on prismatic meshes with enhanced approximation properties,
requiring less regularity of the unknown exact solution $(u,\ssigma)$. This trial space
will also be used in the present work.

The fact that arbitrary discrete trial spaces are admissible for our couplings allows for a further important advantage of space-time methods,
namely the possibility of local mesh refinement simultaneously in both space and time.
We intend to investigate suitable a posteriori error estimators to adaptively
steer space-time local mesh refinement in the future.
In this regard, we remark that the least-squares functional of the space-time FOSLS
formulation~\cite{fk21,gs21} automatically provides a reliable and efficient
estimator, and that~\cite{gv22} recently introduced an estimator for the
weakly-singular boundary integral equation. 
For a posteriori error analysis of stationary FEM-BEM couplings,
we refer to~\cite{affkmp13} and the references therein. 

While a time-stepping method as~\cite{qrsz19} only needs to solve relatively
small linear systems in each time step, a potential downside of simultaneous
space-time methods~\cite{an87,noon88,costabel90,ss09,steinbach15,dns19,sz20,fk21,gs21,gs22,fgk23}
is that one has to solve one huge linear system at once. 
In addition, boundary element discretizations lead to dense matrices due to the nonlocality of the boundary integral operators.
On the other hand, simultaneous space-time methods in principle have the potential for
massive parallelization, see, e.g., \cite{gn16,ns19,vw20} for FEM
and \cite{dzomk19,zwom21} for BEM.
Preconditioners for the employed space-time FOSLS formulation~\cite{fk21,gs21} that can be applied in parallel are still missing in the literature and shall be  addressed in the future.  
Having efficient preconditioners for FEM and BEM at hand, block-diagonal preconditioners give efficient preconditioners for FEM-BEM couplings as has been analyzed in, e.g., \cite{ms98,ffps17} for elliptic transmission problems. 
Moreover, the dense BEM matrices can be compressed via the fast multipole method and $\mathcal{H}$-matrices, cf.~\cite{mst14,mst15,ht18,wmoz22}.

Finally, coercivity also allows to easily treat optimal control and/or parameter-dependent problems for~\eqref{eq:transmission} as in the works~\cite{gs23,fk23,fk24} for the space-time FOSLS formulation of parabolic PDEs~\cite{fk21,gs21}. 
Note that applications to optimal control particularly make the potential advantage of time-stepping methods requiring to store the solution only at each time step obsolete, as the solution on the full space-time cylinder is needed simultaneously for these problems anyway. 

\subsection{Outline}
In Section~\ref{sec:sobolev}, we briefly recall anisotropic Sobolev spaces on the (exterior) space-time cylinder and its lateral boundary. 
We further recall the definition and mapping properties of the corresponding boundary integral operators as well as the corresponding representation formula for solutions of the interior heat equation in Section~\ref{sec:bios}. 
Section~\ref{sec:weak_formulation} then states the precise weak formulation of the considered parabolic transmission problem~\eqref{eq:transmission}.
In particular, a representation formula for the exterior solution $u^{\rm ext}$ is derived in terms of its Cauchy data, exploiting the radiation condition~\eqref{eq:radiation}. 
Section~\ref{sec:fosls} briefly summarizes the employed results of~\cite{gs21} on space-time FOSLS formulations of second-order parabolic problems with inhomogeneous Dirichlet and Neumann boundary conditions. 
Section~\ref{sec:couplings} and~\ref{sec:coercivity}  constitute the core of the paper, introducing three different couplings of FOSLS for the interior problem with boundary integral equations for the exterior problem and proving their coercivity (under certain restrictions). 
In Section~\ref{sec:equivalence}, we verify that these couplings are indeed equivalent to the transmission problem~\eqref{eq:transmission}.
We particularly show that~\eqref{eq:transmission} actually has a unique solution.
The paper is concluded in Section~\ref{sec:numerics} with numerical experiments, demonstrating the performance of the first coupling for smooth but also the limitations for non-smooth solutions under uniform mesh refinement. 
We also briefly comment on the implementation and analyze positive definiteness
of the Galerkin matrices in dependence of the stabilization parameter $\alpha$ for the FOSLS part. 

\subsection{General notation}
Throughout and without any ambiguity, $|\cdot|$ denotes the absolute value of scalars, the Euclidean norm of vectors in $\R^n$, or the measure of a set in $\R^n$, e.g., the length of an interval or the area of a surface in $\R^3$.
We write $A\lesssim B$ to abbreviate $A\le CB$ with some generic constant $C>0$ which is clear from the context.
Moreover, $A\eqsim B$ abbreviates $A\lesssim B\lesssim A$.

\section{Preliminaries}\label{sec:preliminaries}

\subsection{Sobolev spaces} \label{sec:sobolev}
We use standard notation for Lebesgue and Sobolev spaces on bounded Lipschitz domains $\Omega$ and the corresponding boundary $\Gamma$, i.e.,
$L^2(\Omega)$, $H^1(\Omega)$, $H^{-1}(\Omega) = H^1_0(\Omega)'$ and $H^{1/2}(\Gamma)$, $H^{-1/2}(\Gamma) = H^{1/2}(\Gamma)'$.
Similarly, we write $L^2(I;X)$ for the Bochner space of all measurable and square-integrable functions mapping from the time interval $I$ to some Banach space $X$. 
Moreover, we recall the definition of Bochner--Sobolev spaces $H^\nu(I;X) := \set{v\in L^2(I;X)}{\norm{v}{H^{\nu}(I;X)} < \infty}$ for $\nu\in (0,1]$, 
associated with the norm
\begin{align*}
	\norm{v}{H^{\nu}(I;X)}^2
	:= \norm{v}{L^2(I;X)}^2 + |v|_{H^{\nu}(I;X)}^2,
	\quad  |v|_{H^{\nu}(I;X)}^2
	:=\begin{cases}
	\int_I\int_I \frac{\norm{v(t)-v(s)}{X}^2}{|t-s|^{1+2\nu}}\d s\d t &\text{ if }\nu\in(0,1),
	\\
	\norm{\partial_t v}{L^2(I;X)}^2 &\text{ if }\nu=1,
	\end{cases}
\end{align*}
where $\partial_t$ denotes the (weak) time derivative.
Finally, we define the anisotropic Sobolev spaces
\begin{align*}
	H^{1,1/2}(Q) &:= L^2(I;H^{1}(\Omega))\cap H^{1/2}(I;L^2(\Omega)),
	\\
	H^{1/2,1/4}(\Sigma)&:= L^2(I;H^{1/2}(\Gamma))\cap H^{1/4}(I;L^2(\Gamma)).
\end{align*}
We denote by $H^{-1/2,-1/4}(\Sigma)$ the dual space of $H^{1/2,1/4}(\Sigma)$ with duality pairing $\dual{\cdot}{\cdot}_{\Sigma}$ and interpret $L^2(\Sigma)$ as subspace of $H^{-1/2,-1/4}(\Sigma)$ via
\begin{align*}
	\dual{u}{\phi}_\Sigma := \dual{\phi}{u}_\Sigma := \int_\Sigma u(t,\x) \phi(t,\x) \d\x \d t
	\quad\text{for all }u\in H^{1/2,1/4}(\Sigma) \text{ and }\phi\in L^2(\Sigma).
\end{align*}

We recall the continuous embedding
\begin{align} \label{eq:V2C0}
	L^2(I;H^1(\Omega)) \cap H^1(I; H^{-1}(\Omega)) \hookrightarrow H^{1,1/2}(Q) \cap C^0(\overline I; L^2(\Omega))
\end{align}
from, e.g., \cite[pages 480 \& 494]{dl92}. 
Next, we recall that the restriction $(\cdot)|_\Sigma$ is a bounded linear mapping from $H^{1,1/2}(Q)$ to $H^{1/2,1/4}(\Sigma)$; see, e.g., \cite[Chapter~4, Theorem~2.1]{lm72b}. 
Finally, \cite[Proposition~2.18]{costabel90} states that the normal derivative $\nabla_\x(\cdot)\cdot \n_\x$ is a bounded linear mapping from the space $\set{u\in H^{1,1/2}(Q)}{\partial_t u - \Delta_\x u \in L^2(Q)}$ equipped with the canonical graph norm to $H^{-1/2,-1/4}(\Sigma)$. 

For $R>0$, let $B_R(\boldsymbol{0}) := \set{\x\in\R^d}{|\x| < R}$ with boundary $\Gamma_R := \partial B_R(\boldsymbol{0})$. 
For $R>0$ such that $\overline\Omega \subset B_R(\boldsymbol{0})$,
we abbreviate $\Omega^{\rm ext}_R:= \Omega^{\rm ext}\cap B_R(\boldsymbol{0})$, $Q_R^{\rm ext} := I \times \Omega_R^{\rm ext}$, and $\Sigma_R := I \times \Gamma_R$ with
corresponding outward normal vector $\n_{R}^\top=(0,\n_{R,\x}^\top)$.
We define the Sobolev spaces on $Q_R^{\rm ext}$ and $\Sigma_R$ as for $Q$ and $\Sigma$.

\subsection{Boundary integral operators} \label{sec:bios}
Define the heat kernel
\begin{align*}
 G(t,\x) := \begin{cases} \frac{1}{(4\pi t)^{d/2}} \, e^{-\frac{|\x|^2}{4t}} \quad &\text{for }(t,\x)\in (0,\infty)\times\R^d,
 \\
 0 \quad &\text{else}.
 \end{cases}
\end{align*}
Note that for $(t,\x)\neq (0,\boldsymbol{0})$, the function $G$ is
smooth and satisfies the homogeneous heat equation.
For sufficiently smooth functions $\phi$ and $u$ on the lateral boundary
$\Sigma$, we define the single-layer potential 
\begin{align} 
	\label{eq:single_potential}
	(\widetilde {\mathcal V} \phi)(t,\x)
	&:=\int_\Sigma G(t-s,\x-\y) \phi(s, \y) \d\y \d s
	\quad\text{for all }(t,\x)\in Q\cup Q^{\rm ext}
\intertext{and the double-layer potential}
	\label{eq:double_potential}
	 (\widetilde {\mathcal K} u)(t,\x)
	 &:=\int_\Sigma \nabla_\y G(t-s,\x-\y) \cdot \n_\y \, u(s,\y) \d\y \d s 
	 \quad\text{for all }(t,\x)\in Q\cup Q^{\rm ext}.
\end{align}
These potentials can be extended to bounded linear mappings $\widetilde{\mathcal{V}}: H^{-1/2,-1/4}(\Sigma)\to H^{1,1/2}(Q)$ and $\widetilde{\mathcal{K}}: H^{1/2,1/4}(\Sigma) \to H^{1,1/2}(Q)$. 
For any $\phi\in H^{-1/2,-1/4}(\Sigma)$ and $u\in H^{1/2,1/4}(\Sigma)$, $\widetilde {\mathcal V} \phi$ and $\widetilde {\mathcal K} u$ are indeed \emph{potentials} in the sense that they satisfy the homogeneous heat equation. 
This also readily implies that $\widetilde {\mathcal V} \phi, \widetilde {\mathcal K} u \in L^2(I;H^1(\Omega) \cap H^1(I;H^{-1}(\Omega))$. 
Finally, the restriction of $\widetilde {\mathcal V} \phi$ and $\widetilde {\mathcal K} u$ to $\{0\}\times \Omega$ is zero, i.e., $(\widetilde {\mathcal V} \phi)(0,\cdot) = (\widetilde {\mathcal K} u)(0,\cdot) = 0$.
Conversely, if $u\in L^2(I; H^1(\Omega))\cap H^1(I;H^{-1}(\Omega))$ is a weak solution of the heat equation $\partial_t u - \Delta_\x u = 0$ with $u(0,\cdot) = 0$, then there holds the  representation formula
\begin{align}\label{eq:representation}
	u = \widetilde {\mathcal V}(\nabla_\x u \cdot \n_\x)  - \widetilde {\mathcal K}(u|_\Sigma) 
	\quad \text{on }Q.
\end{align}

We further define the single-layer operator $\mathcal{V}$ and the double-layer operator $\mathcal{K}$ as in~\eqref{eq:single_potential}--\eqref{eq:double_potential} with $(t,\x)\in I\times \Sigma$ instead of $(t,\x) \in Q$. 
Note that the corresponding integral are now (weakly) singular.
These operators on the lateral boundary $\Sigma$ can be extended to bounded linear mappings $\mathcal{V}: H^{-1/2,-1/4}(\Sigma)\to H^{1/2,1/4}(\Sigma)$ and $\mathcal{K}: H^{1/2,1/4}(\Sigma)\to H^{1/2,1/4}(\Sigma)$.
In addition, $\mathcal{V}$ is coercive, i.e.,
\begin{align}\label{eq:V_coercive}
	\dual{\mathcal{V}\phi}{\phi}_\Sigma \gtrsim \norm{\phi}{H^{-1/2,-1/4}(\Sigma)}^2 
	\quad \text{for all }\phi\in H^{-1/2,-1/4}(\Sigma).
\end{align}
The restriction of the single-layer potential from the interior $Q$ and from the exterior $Q^{\rm ext}$ to $\Sigma$ coincides with the single-layer operator, i.e., 
\begin{align} \label{eq:single_operator}
	\big((\widetilde{\mathcal{V}} \phi)|_{Q}\big)|_\Sigma 
	= \big((\widetilde{\mathcal{V}} \phi)|_{Q^{\rm ext}}\big)|_\Sigma 
	= \mathcal{V} \phi.
\end{align}
The restriction of the double-layer potential from the interior $Q$ and from the exterior $Q^{\rm ext}$ to $\Sigma$ is given by
\begin{align} \label{eq:double_operator}
	\big((\widetilde{\mathcal{K}}u) |_Q\big)|_\Sigma = (\mathcal{K} - \tfrac12) u
	\quad\text{and}\quad
	\big((\widetilde{\mathcal{K}}u) |_{Q^{\rm ext}}\big)|_\Sigma = (\mathcal{K} + \tfrac12) u.
\end{align}
Finally, for sufficiently smooth functions $\phi$ and $u$ on the lateral boundary $\Sigma$, we define the boundary integral operators
\begin{align} 
	(\mathcal{N} \phi)(t,\x)
	&:=\int_\Sigma \nabla_\x G(t-s,\x-\y)\cdot\n_\x\, \phi(s, \y) \d\y \d s
	\quad\text{for all }(t,\x)\in \Sigma,
	\\ \label{eq:W}
	 ({\mathcal W} u)(t,\x)
	 &:=-\n_\x \cdot \nabla_\x \int_\Sigma \nabla_\y G(t-s,\x-\y) \cdot \n_\y)  u(s,\y) \d\y \d s
	 \quad\text{for all }(t,\x)\in \Sigma.
\end{align}
The precise meaning of~\eqref{eq:W} is given in \eqref{eq:hypsing_operator}.
These operators can be extended to bounded linear mappings $\mathcal{W}:H^{1/2,1/4}(\Sigma)\to H^{-1/2,-1/4}(\Sigma)$ and $\mathcal{N}:H^{-1/2,-1/4}(\Sigma)\to H^{1/2,1/4}(\Sigma)$. 
In addition, $\mathcal{W}$ is coercive, i.e.,
\begin{align}\label{eq:W_coercive}
	\dual{\mathcal{W}u}{u}_\Sigma \gtrsim \norm{u}{H^{1/2,1/4}(\Sigma)}^2 
	\quad \text{for all }u\in H^{1/2,1/4}(\Sigma).
\end{align}
As a matter of fact, \cite[Theorem~3.11]{costabel90} even states the following coercivity
\begin{align}\label{eq:Calderon_coercive}
\begin{split}
	(\phi,u) 
	\begin{pmatrix}\mathcal{V} & -\mathcal{K} \\ \mathcal{N} & \mathcal{W} \end{pmatrix} 
	\begin{pmatrix}\phi \\ u \end{pmatrix}
	\gtrsim \norm{\phi}{H^{-1/2,-1/4}(\Sigma)}^2 + \norm{u}{H^{1/2,1/4}(\Sigma)}^2
	\\
	\text{for all } (\phi,u)\in H^{-1/2,-1/4}(\Sigma)\times H^{1/2,1/4}(\Sigma),
\end{split}
\end{align}
which immediately yields \eqref{eq:V_coercive} and \eqref{eq:W_coercive}.
The interior and the exterior normal derivative of the single-layer potential is given by
\begin{align}\label{eq:adjdouble_operator}
	\nabla_\x\big((\widetilde{\mathcal{V}}\phi)|_Q\big) \cdot\n_x = (\mathcal{N} + \tfrac12)\phi
	\quad\text{and}\quad
	\nabla_\x\big((\widetilde{\mathcal{V}}\phi)|_{Q^{\rm ext}}\big)\cdot\n_x = (\mathcal{N}-\tfrac12)\phi.
\end{align}
The interior and exterior normal derivative of the double-layer potential coincide with $-\mathcal{W}$, i.e., 
\begin{align}\label{eq:hypsing_operator}
	\nabla_\x \big((\widetilde{\mathcal{K}} u)|_{Q}\big)\cdot \n_\x
	= \nabla_\x \big((\widetilde{\mathcal{K}} u)|_{Q^{\rm ext}}\big) \cdot \n_\x
	= -\mathcal{W} u.
\end{align}

For details on proofs and further results, we refer to the seminal works \cite{an87,noon88,costabel90}.

\subsection{Weak formulation}\label{sec:weak_formulation}
Given right-hand sides $f\in L^2(Q)$, $u_0\in L^2(\Omega)$, $u_D\in H^{1/2,1/4}(\Sigma)$, and $\phi_N \in H^{-1/2,-1/4}(\Sigma)$,
we look for weak solutions of \eqref{eq:transmission}, i.e., $u \in L^2(I;H^1(\Omega)) \cap H^1(I;H^{-1}(\Omega))$ and $u^{\rm ext}|_{Q^{\rm ext}_R} \in L^2(I;H^1(\Omega^{\rm ext}_R))$ for all $R>0$ such that $\overline\Omega \subset B_R(\boldsymbol{0})$.
In particular, the PDEs \eqref{eq:heat_interior}--\eqref{eq:heat_exterior} are to be understood in the distributional sense.
Note that $\partial_t u^{\rm ext} - \Delta_\x u^{\rm ext} = 0$ readily yields the additional regularity $u^{\rm ext}|_{Q^{\rm ext}_R} \in L^2(I;H^1(\Omega^{\rm ext}_R)) \cap H^1(I;H^{-1}(\Omega^{\rm ext}_R))$; see~\cite[Lemma 2.15]{costabel90}.
By the properties of the involved Sobolev spaces mentioned in Section~\ref{sec:sobolev}, the left-hand sides in \eqref{eq:transmission} are indeed all well defined.

Let $R>0$ be sufficiently large such that $\overline\Omega \subset B_R(\boldsymbol{0})$. 
With the associated single- and double-layer potential $\widetilde{\mathcal{V}}_R$ and $\widetilde{\mathcal{K}}_R$ on $\Sigma_R$, the representation formula~\eqref{eq:representation} applied to $Q^{\rm ext}_R$ instead of $Q$ yields that any (non-unique) weak solution of the exterior problem~\eqref{eq:heat_exterior}\&\eqref{eq:initial_exterior} is given by 
\begin{align}\label{eq:exterior_solution_R}
	u^{\rm ext} 
	= \widetilde{\mathcal{K}} (u^{\rm ext}|_\Sigma) - \widetilde{\mathcal{V}} (\nabla_\x u^{\rm ext}\cdot\n_\x) 
	+ \widetilde{\mathcal{V}}_R (\nabla_\x u^{\rm ext}\cdot\n_{R,\x}) - \widetilde{\mathcal{K}}_R (u^{\rm ext}|_{\Sigma_R}) \quad \text{on } Q_R^{\rm ext}. 
\end{align}
In what follows, we show that the radiation condition~\eqref{eq:radiation} implies that the last two
terms in~\eqref{eq:exterior_solution_R} vanish as $R\to\infty$.
While this seems to be a fundamental result, we were unable to find the precise statement in the literature and thus give a detailed proof for the sake of completeness. 
We start with the following auxiliary result, which provides a crude estimate 
for the gradient $\nabla_\x u^{\rm ext}$.

\begin{lemma}\label{lem:radiation}
Let $u^{\rm ext}$ be a weak solution of the exterior problem~\eqref{eq:heat_exterior}\&\eqref{eq:initial_exterior} with radiation condition~\eqref{eq:radiation}.
Then, for all $\epsilon>0$ there exists a constant $C(d,\epsilon,T)>0$ such that
\begin{align}
	|\nabla_\x u^{\rm ext}(t,\x)| \le C(d,\epsilon,T) \, t^{-1/2-\epsilon} \quad\text{for }t\in I \text{ as }|\x|\to\infty.
\end{align}
\end{lemma}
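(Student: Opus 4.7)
The plan is to convert the far-field $L^\infty$-bound supplied by~\eqref{eq:radiation} into the desired gradient bound by means of a parabolic rescaling together with interior regularity for caloric functions, exploiting crucially that $u^{\rm ext}$ vanishes at $t=0$.

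First I would fix $R_0>0$ so large that $\overline\Omega\subset B_{R_0}(\boldsymbol{0})$ and
\[
M:=\sup_{t\in I,\,|\y|\ge R_0}|u^{\rm ext}(t,\y)|<\infty
\]
(indeed $M$ can be made arbitrarily small by enlarging $R_0$) thanks to~\eqref{eq:radiation}. For any $\x$ with $|\x|\ge R_0+2\sqrt T$ and any $t\in(0,T/2]$, the parabolic cylinder $(0,2t)\times B_{\sqrt t}(\x)$ is contained in the far-field region $(0,T)\times(\R^d\setminus B_{R_0}(\boldsymbol{0}))$, so $u^{\rm ext}$ solves there the homogeneous heat equation, vanishes at $t=0$, and is bounded by $M$. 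Rescaling parabolically via
\[
v(s,\y):=u^{\rm ext}(ts,\,\x+\sqrt t\,\y),\qquad (s,\y)\in(0,2)\times B_1(\boldsymbol{0}),
\]
then produces a caloric function with $v(0,\cdot)=0$ and $\|v\|_{L^\infty}\le M$ that satisfies the identity $\nabla_\x u^{\rm ext}(t,\x)=t^{-1/2}\nabla_\y v(1,\boldsymbol{0})$.

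Next I would extend $v$ by zero for $s\le 0$. The embedding~\eqref{eq:V2C0} guarantees $u^{\rm ext}(t,\cdot)\to 0$ in $L^2$ as $t\to 0^+$, hence $v(0,\cdot)=0$ in $L^2(B_1(\boldsymbol{0}))$; a short distributional calculation then shows that the extended $v$ is still a weak solution of the heat equation on $(-1,2)\times B_1(\boldsymbol{0})$. In particular, $(1,\boldsymbol{0})$ is now an interior space-time point, and the standard pointwise interior gradient estimate for caloric functions yields $|\nabla_\y v(1,\boldsymbol{0})|\lesssim\|v\|_{L^\infty}\lesssim M$. Undoing the rescaling gives $|\nabla_\x u^{\rm ext}(t,\x)|\lesssim M\,t^{-1/2}$ for $t\in(0,T/2]$ and $|\x|\ge R_0+2\sqrt T$, while the complementary range $t\in[T/2,T]$ is controlled by the usual interior estimate on $(T/4,T)\times B_1(\x)$, which is non-singular in $t$. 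The resulting estimate is in fact slightly stronger than the stated $t^{-1/2-\epsilon}$; the $\epsilon$-slack is convenient if one prefers to replace the $L^\infty$-based interior bound by an $L^p$-based one and absorb the resulting loss via a parabolic Sobolev embedding.

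The main obstacle is the interior-regularity step: standard pointwise gradient estimates for the heat equation are formulated on open space-time cylinders, whereas the cylinder relevant here touches the initial time $t=0$. The zero-extension trick circumvents this, but it must be verified rigorously that the extended function remains a weak solution across $s=0$, which is precisely the point at which the continuity-in-time provided by~\eqref{eq:V2C0} is essential.
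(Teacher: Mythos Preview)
Your approach is correct and genuinely different from the paper's. The paper proceeds by applying the mean-value property on heat balls directly to $\nabla_\x u^{\rm ext}$ (which is itself caloric), choosing the heat-ball radius $r=\sqrt{t}$, then integrating by parts to replace $\nabla_\y u^{\rm ext}$ by $u^{\rm ext}$ (bounded in the far field via~\eqref{eq:radiation}), and finally computing the resulting integrals explicitly. In $d=2$ a logarithmic factor appears in the heat-ball formula and is absorbed at the cost of the $\epsilon$ in the statement; in $d=3$ the paper already obtains $t^{-1/2}$. Your parabolic rescaling together with the black-box interior gradient estimate is shorter and yields the sharper bound $t^{-1/2}$ in both dimensions, so the $\epsilon$ is in fact not needed with your argument. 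The trade-off is that the paper's computation is entirely self-contained, whereas you invoke interior regularity as a known result.

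One remark: the step you flag as the ``main obstacle'' is not actually an obstacle, and the zero-extension is unnecessary. After rescaling, the point $(1,\boldsymbol{0})$ is already a genuine parabolic interior point of $(0,2)\times B_1(\boldsymbol{0})$: the standard interior gradient estimate on, say, $Q_{1/2}(1,\boldsymbol{0})=(3/4,1]\times B_{1/2}(\boldsymbol{0})\subset\subset (0,2)\times B_1(\boldsymbol{0})$ applies directly and gives $|\nabla_\y v(1,\boldsymbol{0})|\le C\,\|v\|_{L^\infty}\le C\,M$ with a dimensional constant. The cylinder does not ``touch'' $s=0$ in any problematic sense. Your extension argument is correct (the initial condition $u^{\rm ext}(0,\cdot)=0$ in $C^0(\overline I;L^2)$ indeed makes the extended function weakly caloric across $s=0$), just superfluous.
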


\begin{proof}
By parabolic regularity, e.g., \cite[Theorem~1.58]{folland95}, the homogeneous heat equation~\eqref{eq:heat_exterior} implies that $u^{\rm ext}$ is even a smooth strong solution of $\partial_t u^{\rm ext} - \Delta_\x u^{\rm ext}=0$ on $Q^{\rm ext}$. 
In particular, each component of $\nabla_\x u^{\rm ext}$ satisfies the homogeneous heat equation strongly on $Q^{\rm ext}$. 
We may thus apply the mean-value property for the heat equation, e.g.,~\cite[Section~2.3, Theorem~3]{evans10} for each component:
Let $R>0$ be sufficiently large such that $\overline\Omega \subset B_R(\boldsymbol{0})$ and $|u^{\rm ext}(t,\x)| \le 1$ for all $t\in I$ and all $\x\in \Omega^{\rm ext}$ with $|\x|\ge R$. 
Now, we fix coordinates $t\in I$ and $\x\in \Omega^{\rm ext}$ with $|\x|\ge 2R$.
For $r>0$, we define the corresponding heat ball
\begin{align}\label{eq:heat_ball}
\begin{split}
	E(t,\x;r) &:= \set{(s,\y) \in \R^{d+1}}{G(t-s,\x-\y) \ge r^{-d}} 
	\\ 
	&= \set{(s,\y) \in \R^{d+1}}{t-\frac{r^2}{4\pi} \le s\le t, |\x-\y|^2 \le 2d (t-s) \ln \frac{r^2}{4\pi (t-s)}}.
\end{split}
\end{align}
For sufficiently small $r>0$ with $E(t,\x;r)\subset Q^{\rm ext}$, the following mean-value property holds:
\begin{align}\label{eq:mean_value}
	\nabla_\x u^{\rm ext}(t,\x) = \frac{1}{4r^d} \int_{E(t,\x;r)} \nabla_\x u^{\rm ext}(s,\y) \frac{|\x-\y|^2}{(t-s)^2} \d\y \d s.
\end{align}
Note that for the choice $r=\sqrt{t}$, the resulting term $2d (t-s) \ln \frac{t}{4\pi (t-s)}$ of~\eqref{eq:heat_ball} is uniformly bounded by a generic constant for $t\in I$ and $\tfrac{3}{4\pi} t \le  s \le t$. 
Hence, we may assume that $R^2$ is also larger than this upper bound to guarantee that the closed ball $B(\x;t,s) := \set{\y\in\R^d}{|\x-\y|^2 \le 2d (t-s) \ln \frac{t}{4\pi (t-s)}}$ is contained in $\Omega^{\rm ext}\setminus B_R(\boldsymbol{0})$,
 and thus $E(t,\x;\sqrt{t}) \subset Q^{\rm ext}$. 
The mean-value property~\eqref{eq:mean_value} and integration by parts hence yield that 
\begin{align*}
	&\nabla_\x u^{\rm ext}(t,\x) = \frac{1}{4t^{d/2}} \int_{\frac{3}{4\pi} t}^t \int_{B(\x;t,s)} \nabla_\y u^{\rm ext}(s,\y) \frac{|\x-\y|^2}{(t-s)^2} \d\y \d s
	\\
	&= \frac{1}{4t^{d/2}} \Big(\int_{\frac{3}{4\pi} t}^t \int_{B(\x;t,s)} u^{\rm ext}(s,\y) \frac{2(\x-\y)}{(t-s)^2} \d\y \d s
		+ \int_{\frac{3}{4\pi} t}^t \int_{\partial B(\x;t,s)} u^{\rm ext}(s,\y) \n_{B(\x;t,s)}(\y) \frac{|\x-\y|^2}{(t-s)^2} \d\y \d s\Big).
\end{align*}
Here, $\n_{B(\x;t,s)}(\y)$ abbreviates the outward normal vector corresponding to $B(\x;t,s)$. 
Within both integration domains, we can further exploit that $|u^{\rm ext}(s,\y)|\le 1$. 
Together with the use of polar coordinates, we see that 
\begin{align*}
	&|\nabla_\x u^{\rm ext}(t,\x)| \le \frac{1}{4t^{d/2}} \Big(\int_{\frac{3}{4\pi} t}^t \int_{B(\x;t,s)} \frac{2|\x-\y|}{(t-s)^2} \d\y \d s
		+ \int_{\frac{3}{4\pi} t}^t \int_{\partial B(\x;t,s)} \frac{|\x-\y|^2}{(t-s)^2} \d\y \d s\Big)
	\\
	&\lesssim \frac{1}{t^{d/2}} \int_{\frac{3}{4\pi} t}^t (t-s)^{(d-3)/2} \Big(\ln \frac{t}{4\pi (t-s)}\Big)^{(d+1)/2} \d s
	= \frac{1}{t^{d/2}} \int_{0}^{\frac{1}{4\pi}t} s^{(d-3)/2} \Big(\ln \frac{t}{4\pi s}\Big)^{(d+1)/2} \d s.
\end{align*}
If $d=3$, the last integral is just $\tfrac{1}{2\pi}t$, which concludes the proof in this case even for $\epsilon=0$. 
If $d=2$, we assume without loss of generality that $0<\epsilon<1/2$.
We write $\ln \frac{t}{4\pi s} = \ln t - \ln  4\pi - \ln s \le \ln T - \ln s \le 2 \max\{|\ln T|, |\ln s|\}$ and get that 
\begin{align*}
	|\nabla_\x u^{\rm ext}(t,\x)|
	&\lesssim \frac{1}{t} \int_{0}^{\frac{1}{4\pi}t} s^{-1/2} \Big(|\ln T| ^{3/2} + |\ln s|^{3/2}\Big)\d s
	\\
	&= \frac{1}{t} \int_{0}^{\frac{1}{4\pi}t} s^{-1/2-\epsilon} \, s^\epsilon \Big(|\ln T| ^{3/2} + |\ln s|^{3/2}\Big)\d s
	\\
	&\le \frac{\max_{s\in[0,T]}s^\epsilon \big(|\ln T| ^{3/2} + |\ln s|^{3/2}\big)}{t} \int_{0}^{\frac{1}{4\pi}t} s^{-1/2-\epsilon} \d s
	\lesssim t^{-1/2-\epsilon}.
\end{align*}
This concludes the proof.
\end{proof}

\begin{proposition}
Let $u^{\rm ext}$ be a weak solution of the exterior problem~\eqref{eq:heat_exterior}\&\eqref{eq:initial_exterior} with radiation condition~\eqref{eq:radiation}. 
Then, it satisfies the representation formula
\begin{align}\label{eq:exterior_solution}
	u^{\rm ext} 
	= \widetilde{\mathcal{K}} (u^{\rm ext}|_\Sigma) - \widetilde{\mathcal{V}} (\nabla_\x u^{\rm ext}\cdot\n_\x)
	\quad \text{on } Q^{\rm ext}.
\end{align}
\end{proposition}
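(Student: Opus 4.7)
The plan is to take the representation formula~\eqref{eq:exterior_solution_R} on $Q_R^{\rm ext}$, fix $(t,\x) \in Q^{\rm ext}$, and show that the two auxiliary terms
\[
	\widetilde{\mathcal{V}}_R(\nabla_\x u^{\rm ext}\cdot \n_{R,\x})(t,\x) \quad\text{and}\quad \widetilde{\mathcal{K}}_R(u^{\rm ext}|_{\Sigma_R})(t,\x)
\]
both tend to $0$ as $R\to\infty$. For $R$ so large that $|\x|\le R/2$, one has $|\x-\y|\ge R/2$ whenever $\y\in\Gamma_R$, which will be used together with the explicit Gaussian form of $G$ to beat the $R^{d-1}$ growth of $|\Gamma_R|$.

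The first step is the technical heart: establishing the kernel estimates
\[
	\sup_{s\in(0,t)}\int_{\Gamma_R} G(t-s,\x-\y)\d\y \;\lesssim\; \frac{1}{R},\qquad \sup_{s\in(0,t)}\int_{\Gamma_R} |\nabla_\y G(t-s,\x-\y)|\d\y \;\lesssim\; \frac{1}{R^2}.
\]
For the first, one bounds $G(t-s,\x-\y)\le(4\pi(t-s))^{-d/2}e^{-R^2/(16(t-s))}$, multiplies by $|\Gamma_R|\lesssim R^{d-1}$, and observes that the function $\tau\mapsto \tau^{-d/2}e^{-cR^2/\tau}$ attains its maximum at $\tau=O(R^2)$, yielding a value $O(R^{-d})$. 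The bound on $|\nabla_\y G|$ follows analogously after using $|\nabla_\y G(t-s,\x-\y)|=\frac{|\x-\y|}{2(t-s)}G(t-s,\x-\y)\lesssim R(t-s)^{-d/2-1}e^{-R^2/(16(t-s))}$ and optimising in $\tau=t-s$.

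For the single-layer term, Lemma~\ref{lem:radiation} (applied with some fixed $0<\epsilon<1/2$) yields $|\nabla_\y u^{\rm ext}(s,\y)|\le C s^{-1/2-\epsilon}$ uniformly for $\y\in\Gamma_R$ once $R$ is sufficiently large. Combined with the first kernel estimate,
\[
	|\widetilde{\mathcal{V}}_R(\nabla_\x u^{\rm ext}\cdot\n_{R,\x})(t,\x)| \;\le\; C\int_0^t s^{-1/2-\epsilon}\int_{\Gamma_R}G(t-s,\x-\y)\d\y\d s \;\lesssim\; \frac{1}{R}\int_0^t s^{-1/2-\epsilon}\d s \;\to\; 0.
\]
For the double-layer term, the radiation condition~\eqref{eq:radiation} gives $\sup_{s\in I,\,|\y|\ge R}|u^{\rm ext}(s,\y)|\to 0$ as $R\to\infty$, so that together with the second kernel estimate
\[
	|\widetilde{\mathcal{K}}_R(u^{\rm ext}|_{\Sigma_R})(t,\x)| \;\le\; \sup_{s\in I,\,\y\in\Gamma_R}|u^{\rm ext}(s,\y)|\cdot\int_0^t\int_{\Gamma_R}|\nabla_\y G(t-s,\x-\y)|\d\y\d s \;\to\; 0.
\]
Passing to the limit in~\eqref{eq:exterior_solution_R} then yields~\eqref{eq:exterior_solution}.

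The main obstacle is genuinely the first step: one must verify carefully that the weak decay rate of $\nabla_\x u^{\rm ext}$ provided by Lemma~\ref{lem:radiation} (which is only $O(s^{-1/2-\epsilon})$ and in particular not integrable near $s=0$ without help from the geometry) is compatible with the $1/R$ decay of the surface integral of $G$, and that no contribution near $s=t$ spoils the argument. Because the Gaussian kernel concentrates at $\tau=O(R^2)$, the estimate is in fact uniform in $s\in(0,t)$, which is precisely what makes the Fubini-type bound work.
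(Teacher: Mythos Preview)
Your proof is correct and follows the same overall strategy as the paper: start from~\eqref{eq:exterior_solution_R}, fix $(t,\x)\in Q^{\rm ext}$, and show that the two $\Sigma_R$-contributions vanish as $R\to\infty$, using Lemma~\ref{lem:radiation} for the single-layer term and the radiation condition for the double-layer term. The only difference is the kernel estimate: the paper invokes the pointwise bound $|G(s,\y)|\lesssim s^{1/2}|\y|^{-d-1}$ (and its analogue for $\nabla_\y G$) from~\cite{costabel90}, whereas you obtain equivalent decay by directly optimising the Gaussian $\tau\mapsto\tau^{-d/2}e^{-cR^2/\tau}$ over $\tau=t-s$; both routes yield the required $R^{-1}$ respectively $R^{-2}$ decay of the surface integrals and are interchangeable here.
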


\begin{proof}
It remains to show that the last two terms in~\eqref{eq:exterior_solution_R} tend to zero.
To this end, let $(t,\x)\in Q_R^{\rm ext}$ be arbitrary but fixed. 
The simple estimate $|G(s,\y)| \lesssim s^{1/2} |\y|^{-d-1}$ for all $s\ge0$, $\y\in\R^d$ (see, e.g., the proof of \cite[Theorem~3.11]{costabel90}), the preceding lemma, and $|\Gamma_R| \simeq R^{d-1}$ show for sufficiently large $R>0$ and $0<\epsilon<1/2$ that
\begin{align*}
	\big|\big(\widetilde{\mathcal{V}}_R (\nabla_\x u^{\rm ext}\cdot\n_{R,\x})\big)(t,\x)\big|
	&= \Big|\int_{\Sigma_R} G(t-s,\x-\y) (\nabla_\y u^{\rm ext}(s,\y)\cdot\y\,|\y|^{-1}) \d\y \d s\Big|
	\\
	&\lesssim \int_0^t\int_{\Gamma_R} (t-s)^{1/2} |\x-\y|^{-d-1} s^{-1/2-\epsilon}\d\y \d s
	\\
	&\lesssim R^{-2} \int_0^t (t-s)^{1/2} s^{-1/2-\epsilon}\d\y \d s 
	\lesssim R^{-2} \to 0 \quad\text{as }R\to\infty.
\end{align*}
Using the radiation condition~\eqref{eq:radiation} instead of Lemma~\ref{lem:radiation}, we similarly see that
\begin{align*}
	\big|\big(\widetilde{\mathcal{K}}_R (u^{\rm ext}|_{\Sigma_R})\big)(t,\x) \big|
	&= \Big|\int_{\Sigma_R} \nabla_\y G(t-s,\x-\y)\cdot \y\,|\y|^{-1} \, u^{\rm ext}(s,\y) \d\y \d s\Big|
	\\
	&\lesssim \int_0^t \int_{\Gamma_R} \frac{|(\x-\y)\cdot\y|}{(t-s)^{d/2+1}|\y|} \, e^{-\frac{|\x-\y|^2}{4(t-s)}} \d\y \d s
	\\
	&\lesssim \int_0^t \int_{\Gamma_R} (t-s)^{-1/2} |\x-\y|^{-d} \, \d\y \d s
	\\
	&\lesssim R^{-1} \int_0^t (t-s)^{-1/2} \d s 
	\lesssim R^{-1} \to 0 \quad\text{as }R\to\infty.
\end{align*}
This concludes the proof.
\end{proof}

By the preceding proposition, once the weak solution $u$ of \eqref{eq:transmission} is known,
owing to the jump conditions~\eqref{eq:jump_dirichlet} and \eqref{eq:jump_neumann},
the corresponding $u^{\rm ext}$ can be computed as 
\begin{align}\label{eq:interior2exterior}
	u^{\rm ext} 
	= \widetilde{\mathcal{K}} (u|_\Sigma - u_D) - \widetilde{\mathcal{V}} ((\A\nabla_\x u) \cdot\n_\x - \phi_N). 
\end{align}
Conversely, in the proof of \cite[Theorem~3.11]{costabel90}, it is shown that any function of the form
\begin{align}\label{eq:representation_radiation}
	u^{\rm ext} 
	= \widetilde{\mathcal{K}} u - \widetilde{\mathcal{V}} \phi 
	\text{ satisfies }\eqref{eq:radiation} \text{ for all }u\in H^{1/2,1/4}(\Sigma), \phi\in H^{-1/2,-1/4}(\Sigma).
\end{align}

\begin{remark}\label{rem:radiation}
To be more precise, the cited \cite[Theorem~3.11]{costabel90} states for $u^{\rm ext}$
defined as in~\eqref{eq:representation_radiation} that $u^{\rm ext} (\cdot,\x)=\OO(|x|^{-d})$
and $\nabla_\x u^{\rm ext} (\cdot,\x) \cdot \x = \OO(|x|^{-d})$ as $|x|\to\infty$. 
Using the same argument, one even sees that $u^{\rm ext} (\cdot,\x)=\OO(|x|^{-\mu})$ and $\nabla_\x u^{\rm ext} (\cdot,\x) = \OO(|x|^{-\mu})$ as $|x|\to\infty$ for all fixed $\mu\ge0$ (with hidden constants depending only on $d$, $\mu$, $u^{\rm ext}$, and $T$).
The assumption~\eqref{eq:radiation} that $u^{\rm ext}$ converges to zero as $|\x|\to\infty$ thus automatically yields a much stronger decay of $u^{\rm ext}$ and its spatial gradient $\nabla_\x u^{\rm ext}$. 
On the other hand, to guarantee that the last two terms in \eqref{eq:exterior_solution_R} converge to zero, it is clear from our proof  that one could even allow for a certain growth of $u^{\rm ext} $ (and $\nabla_\x u^{\rm ext}$). 
Here and in the following, the \emph{minimal} required radiation condition would be to explicitly assume that those two terms vanish as $R\to\infty$. 
\end{remark}

\section{Coupling of FOSLS and boundary integral equations}

\subsection{First-order system least squares formulation for interior problem}\label{sec:fosls}
We define the spaces
\begin{align*}
	U &:= \boldsymbol{H}(\div;Q) \cap (L^2(I;H^1(\Omega))\times L^2(Q)^d),
	\\
	U_D &:= \set{(u,\ssigma)\in U}{u|_\Sigma=0},
	\\
	U_N &:= \set{(u,\ssigma)\in U}{\ssigma\cdot\n_\x=0},
	\\
	L &:= L^2(Q) \times L^2(Q)^d \times L^2(\Omega).
\end{align*}
Here, $\div$ denotes the time-space divergence  $\div(u,\ssigma)=\partial_t u + \div_\x \ssigma$ and
$\boldsymbol{H}(\div;Q) = \set{(u,\ssigma)\in L^2(Q)^{1+d}}{\div (u,\ssigma) \in L^2(Q)}$.
The spaces $U, U_D, U_N$ are equipped with the canonical graph norm
\begin{align*}
	\norm{(u,\ssigma)}{U}^2 := \norm{u}{L^2(Q)}^2 + \norm{\nabla_\x u}{L^2(Q)}^2 + \norm{\ssigma}{L^2(Q)}^2 + \norm{\div (u,\ssigma)}{L^2(Q)}^2. 
\end{align*}
We further define the bounded linear operators $\mathcal{G}: U\to L$, $\mathcal{G}_D: U\to L\times H^{1/2,1/4}(\Sigma)$, and $\mathcal{\mathcal{G}}_N: U\to L\times H^{-1/2,-1/4}(\Sigma)$  by
\begin{align*}
	\mathcal{G}(u,\ssigma) &:= \big(\div \u, \A\nabla_\x u  + \ssigma, u(0,\cdot)\big), 
	\\
	\mathcal{G}_D(u,\ssigma) &:= \big(\mathcal{G}(u,\ssigma),u|_\Sigma\big), 
	\\
	\mathcal{G}_N(u,\ssigma) &:= \big(\mathcal{G}(u,\ssigma),\ssigma\cdot \n_\x\big). 
\end{align*}
Note that $\u\cdot \n = \ssigma\cdot\n_\x$ for all $\u=(u,\ssigma)\in U$.
By \cite[Lemma~2.2]{gs21},
\begin{align}\label{eq:lemma_gs21}
  (u,\ssigma)\mapsto u \text{ is a bounded linear mapping from $U$ to $L^2(I;H^1(\Omega)) \cap H^1(I; H^{-1}(\Omega))$},
\end{align}
and~\eqref{eq:V2C0} implies that $\mathcal{G}$ is indeed well defined and bounded. 
According to \cite[Theorem~2.8]{gs21}, the latter two operators, $\mathcal{G}_D$ and $\mathcal{\mathcal{G}}_N$, are even bounded linear isomorphisms. 
Hence, $\mathcal{G}|_{U_D}$ and $\mathcal{G}|_{U_N}$ are bounded linear isomorphisms as well. 
%
%

We finally note that any weak solution $u$ of the interior problem~\eqref{eq:heat_interior}\&\eqref{eq:initial_interior} solves 
\begin{align}\label{eq:transmission_interior}
	\mathcal{G}\u = (f,\boldsymbol{0},u_0) \quad\text{with }\u=(u,\ssigma)=(u,-\A\nabla_\x u)\in U,
\end{align}
or equivalently, for arbitrary fixed $\alpha>0$,
\begin{align}\label{eq:transmission_interior2}
	b_\alpha(\u,\v)
	:=\alpha\dual{\mathcal{G}\u}{\mathcal{G}\v}_L 
	= \alpha \dual{(f,\boldsymbol{0},u_0)}{\mathcal{G}\v}_L \quad \text{for all }\v = (v,\ttau)\in U, 
\end{align} 
Here, $\dual{\cdot}{\cdot}_L$ denotes the scalar product on $L$.
More explicitly, this reads as 
\begin{align*}
	b_\alpha(\u,\v)
	&=\alpha \big[\dual{\div \u}{\div \v}_{L^2(Q)} + \dual{\A\nabla_\x u + \ssigma}{\A\nabla_\x v  + \ttau}_{L^2(Q)} + \dual{u(0,\cdot)}{v(0,\cdot)}_{L^2(\Omega)}\big]
	\\
	&= \alpha \big[\dual{f}{\div \v}_{L^2(Q)} + \dual{u_0}{v(0,\cdot)}_{L^2(\Omega)}\big],
\end{align*} 
where $\dual{\cdot}{\cdot}_{L^2(Q)}$ denotes the scalar product on $L^2(Q)$ and $\dual{\cdot}{\cdot}_{L^2(\Omega)}$ the scalar product on $L^2(\Omega)$.

\subsection{Coupling to exterior problem via boundary integral equations}\label{sec:couplings}
We introduce couplings to three different boundary integral equations obtained from the representation formula~\eqref{eq:interior2exterior} for $u^{\rm ext}$.
While the first two couplings only involve two of the four boundary integral operators but are
only provably coercive in case of $\A={\bf Id}$, the third coupling involves all four boundary
integral operators but is coercive for any symmetric uniformly positive definite
$\A\in L^\infty(Q)^{d\times d}$.

\subsubsection{Dirichlet trace of representation formula}\label{sec:dirichlet_coupling}
If $u^{\rm ext}$ is the corresponding weak solution of the transmission problem~\eqref{eq:transmission},
its representation~\eqref{eq:interior2exterior} restricted to $\Sigma$
(using the identities~\eqref{eq:single_operator}--\eqref{eq:double_operator}),
the jump conditions~\eqref{eq:jump_dirichlet}, and the fact that
$(\A\nabla_\x u)\cdot\n_\x = -\u\cdot \n = -\ssigma\cdot\n_\x$ from~\eqref{eq:transmission_interior}
yield that
\begin{align}\label{eq:transmission_exterior}
	\mathcal{V} (-\ssigma \cdot \n_\x) + (\tfrac12-\mathcal{K}) (u|_\Sigma) = \mathcal{V} \phi_N + (\tfrac12-\mathcal{K}) u_D.
\end{align}
Since $\mathcal{G}_N(U) = L\times H^{-1/2,-1/4}(\Sigma)$, this is equivalent to the following variational formulation: 
\begin{align*}
	c_1(\u,\v):=\dual{\mathcal{V} (\ssigma \cdot \n_\x) + (\mathcal{K}-\tfrac12) (u|_\Sigma)}{\ttau \cdot \n_\x}_\Sigma 
	= \dual{-\mathcal{V} \phi_N + (\mathcal{K}-\tfrac12) u_D}{\ttau \cdot \n_\x}_\Sigma
	\\
	\text{ for all }\v=(v,\ttau)\in U.
\end{align*}
Combining~\eqref{eq:transmission_interior2} and~\eqref{eq:transmission_exterior}, we conclude that any solution of~\eqref{eq:transmission} with $\u=(u,-\A\nabla_\x u)$ satisfies
\begin{align} \notag
	&b_\alpha(\u,\v) + c_1(\u,\v)  
	= \alpha \big[\dual{f}{\div \v}_{L^2(Q)} + \dual{u_0}{v(0,\cdot)}_{L^2(\Omega)}\big] + \dual{-\mathcal{V} \phi_N + (\mathcal{K}-\tfrac12) u_D}{\ttau\cdot\n_\x}_\Sigma
	\\ \label{eq:transmission_weak}&\hspace{11cm}
	\text{for all }\v = (v,\ttau)\in U.
\end{align}
By considering $\v\in U_N$ and recalling that $\mathcal{G}(U_N)=L$, one sees that \eqref{eq:transmission_weak}
conversely implies \eqref{eq:transmission_interior} and thus also \eqref{eq:transmission_exterior}.
Note that the left-hand side and the right-hand side in \eqref{eq:transmission_weak} are linear and
continuous with respect to inputs $\u,\v\in U$. 

\subsubsection{Neumann trace of representation formula}\label{sec:neumann_coupling}
Alternatively, one can consider the normal derivative of \eqref{eq:interior2exterior} and use
the identities~\eqref{eq:adjdouble_operator}--\eqref{eq:hypsing_operator}.
Then, the jump condition~\eqref{eq:jump_neumann} and
$(\A\nabla_\x u)\cdot\n_\x = -\ssigma\cdot\n_\x$ from~\eqref{eq:transmission_interior} yield the equation
\begin{align}\label{eq:transmission_exterior2}
	\mathcal{W}(u|_\Sigma) + (\mathcal{N} + \tfrac12)(-\ssigma\cdot\n_\x) 
	= \mathcal{W}u_D + (\mathcal{N} + \tfrac12)\phi_N,
\end{align}
or equivalently, owing to $\mathcal{G}_D(U) = L\times H^{1/2,1/4}(\Sigma)$,
\begin{align*}
	c_2(\u,\v):=\dual{\mathcal{W}(u|_\Sigma) + (\mathcal{N} + \tfrac12)(-\ssigma\cdot\n_\x)}{v|_\Sigma}_\Sigma 
	= \dual{\mathcal{W}u_D + (\mathcal{N} + \tfrac12)\phi_N}{v|_\Sigma}_\Sigma 
	\\
	\text{for all }\v=(v,\ttau)\in U.
\end{align*}
Combining~\eqref{eq:transmission_interior2} and~\eqref{eq:transmission_exterior}, we conclude that any solution of~\eqref{eq:transmission} with $\u=(u,-\A\nabla_\x u)$ satisfies
\begin{align} \notag
	&b_\alpha(\u,\v) + c_2(\u,\v) 
	= \alpha \big[\dual{f}{\div \v}_{L^2(Q)} + \dual{u_0}{v(0,\cdot)}_{L^2(\Omega)}\big]
	+ \dual{\mathcal{W}u_D + (\mathcal{N} + \tfrac12)\phi_N}{v|_\Sigma}_\Sigma 
	\\ \label{eq:transmission_weak2} & \hspace{11cm}
	\text{for all }\v = (v,\ttau)\in U.
\end{align}
By considering $\v\in U_D$ and recalling that $\mathcal{G}(U_D)=L$, one sees that \eqref{eq:transmission_weak2} conversely implies \eqref{eq:transmission_interior} and thus also \eqref{eq:transmission_exterior2}.
Note that the left-hand side and the right-hand side in \eqref{eq:transmission_weak2} are linear and continuous with respect to inputs $\u,\v\in U$.

\subsubsection{Dirichlet and Neumann trace of representation formula}
Finally, one can also consider the sum of the two bilinear forms $c_1$ and $c_2$,
which then corresponds to the sum of the two variational formulations, i.e., 
\begin{align*}
	c_3(\u,\v) := \dual{\mathcal{V} (\ssigma \cdot \n_\x) + (\mathcal{K} - \tfrac12 ) (u|_\Sigma)}{\ttau \cdot \n_\x}_\Sigma + \dual{\mathcal{W}(u|_\Sigma) + (\mathcal{N}+\tfrac12)(-\ssigma\cdot\n_\x)}{v|_\Sigma}_\Sigma \\
	= \dual{-\mathcal{V} \phi_N + (\mathcal{K} - \tfrac12) u_D}{\ttau \cdot \n_\x}_\Sigma 
	+ \dual{\mathcal{W}u_D + (\mathcal{N} + \tfrac12)\phi_N}{v|_\Sigma}_\Sigma 
	\quad\text{for all }\v=(v,\ttau)\in U.
\end{align*}
We overall conclude from \eqref{eq:transmission} that
\begin{align} \notag
	&b_\alpha(\u,\v) + c_3(\u,\v) 
	= \alpha \big[\dual{f}{\div \v}_{L^2(Q)} + \dual{u_0}{v(0,\cdot)}_{L^2(\Omega)}\big]
	+ \dual{-\mathcal{V} \phi_N + (\mathcal{K} - \tfrac12) u_D}{\ttau \cdot \n_\x}_\Sigma 
	\\ \label{eq:transmission_weak3} &\hspace{5cm}
	+ \dual{\mathcal{W}u_D + (\mathcal{N} + \tfrac12)\phi_N}{v|_\Sigma}_\Sigma 
	\quad\text{for all }\v = (v,\ttau)\in U.
\end{align}
Contrary to Sections~\ref{sec:dirichlet_coupling} and~\ref{sec:neumann_coupling}, it is not evident that \eqref{eq:transmission_weak3} conversely implies \eqref{eq:transmission_interior} and/or either of the equations \eqref{eq:transmission_exterior} and \eqref{eq:transmission_exterior2}.
However, in Section~\ref{sec:equivalence}, we will see that this is indeed the case. 
Note that the left-hand side and the right-hand side in \eqref{eq:transmission_weak3} are linear and continuous with respect to inputs $\u,\v\in U$.

\subsection{Coercivity of combined bilinear forms}\label{sec:coercivity}
Our next goal is to show that the continuous bilinear forms
$b_\alpha + c_i: U \times U \to \R$ are coercive for $i\in\{1,2,3\}$ and sufficiently large $\alpha>0$.
We start with the following auxiliary result.

\begin{lemma}\label{lem:Aubin-Lions-Simon}
It holds that 
\begin{align*}
	\dual{u|_\Sigma}{-\ssigma\cdot\n_\x}_\Sigma  
	= \norm{\A^{1/2}\nabla_\x u}{L^2(Q)}^2 + \tfrac12\norm{u(T,\cdot)}{L^2(\Omega)}^2
	\gtrsim \norm{\u}{U}^2
	\quad \text{for all } \u=(u,\ssigma)\in{\rm ker}\,\mathcal{G}.
\end{align*}
\end{lemma}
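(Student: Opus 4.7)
The proof has two parts: (a) prove the equality via integration by parts, and (b) prove the lower bound by a Poincaré-in-time argument exploiting the initial condition $u(0,\cdot) = 0$.

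For part (a), the starting point is the Green's identity for space-time $\boldsymbol{H}(\div;Q)$-fields,
\begin{align*}
\int_Q (\div\u)\,\phi + \int_Q \u \cdot \nabla \phi = \int_{\partial Q} \phi\,(\u\cdot\n),
\end{align*}
applied formally with $\phi = u$. Since $\partial Q$ decomposes into $\Sigma$ (with outward normal $\n=(0,\n_\x)$), $\{T\}\times\Omega$ (outward normal $(1,\boldsymbol{0})$), and $\{0\}\times\Omega$ (outward normal $(-1,\boldsymbol{0})$), the boundary term becomes $\int_\Sigma u\,(\ssigma\cdot\n_\x) + \norm{u(T,\cdot)}{L^2(\Omega)}^2 - \norm{u(0,\cdot)}{L^2(\Omega)}^2$. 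Using $\div\u = 0$ on the left, the pure time IBP $\int_Q u\,\partial_t u = \tfrac12\bigl(\norm{u(T,\cdot)}{L^2(\Omega)}^2 - \norm{u(0,\cdot)}{L^2(\Omega)}^2\bigr)$, the substitution $\int_Q \ssigma\cdot\nabla_\x u = -\norm{\A^{1/2}\nabla_\x u}{L^2(Q)}^2$ (from $\ssigma = -\A\nabla_\x u$), and $u(0,\cdot)=0$, the claimed first identity follows after rearrangement. Because $u$ is only in $L^2(I;H^1(\Omega)) \cap H^1(I;H^{-1}(\Omega))$ and not in $H^1(Q)$, plugging $\phi = u$ into Green's identity requires a density/approximation argument; this is the main technical subtlety of the proof.

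For part (b), on $\ker\mathcal{G}$ the norm $\norm{\u}{U}^2$ reduces to $\norm{u}{L^2(Q)}^2 + \norm{\nabla_\x u}{L^2(Q)}^2 + \norm{\ssigma}{L^2(Q)}^2$ (since $\div\u=0$). Uniform boundedness of $\A$ yields $\norm{\ssigma}{L^2(Q)}^2 \lesssim \norm{\nabla_\x u}{L^2(Q)}^2$, and uniform ellipticity gives $\norm{\nabla_\x u}{L^2(Q)}^2 \lesssim \norm{\A^{1/2}\nabla_\x u}{L^2(Q)}^2$; it thus remains to bound $\norm{u}{L^2(Q)}^2$ by $\norm{\nabla_\x u}{L^2(Q)}^2$. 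Here is where the name Aubin--Lions--Simon comes in: from $\div\u=0$ we have $\partial_t u = -\div_\x\ssigma \in L^2(I;H^{-1}(\Omega))$ with $\norm{\partial_t u}{L^2(I;H^{-1}(\Omega))}\lesssim\norm{\ssigma}{L^2(Q)}$ (by duality against $H^1_0(\Omega)$). Combined with $u(0,\cdot) = 0$, the fundamental theorem of calculus in $H^{-1}(\Omega)$ yields $\norm{u}{L^2(I;H^{-1}(\Omega))} \lesssim T\,\norm{\nabla_\x u}{L^2(Q)}$. Applying Ehrling's inequality $\norm{w}{L^2(\Omega)} \le \epsilon\,\norm{w}{H^1(\Omega)} + C_\epsilon\,\norm{w}{H^{-1}(\Omega)}$ (valid on bounded Lipschitz $\Omega$ via the compact embedding $H^1\hookrightarrow L^2$), squaring, integrating in $t$, and absorbing $\norm{u}{L^2(Q)}^2$ on the left for sufficiently small $\epsilon$, one obtains $\norm{u}{L^2(Q)}^2 \lesssim \norm{\nabla_\x u}{L^2(Q)}^2$. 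Assembling all bounds proves the claim. Once the Green's identity in (a) is secured, this final step is routine and hinges only on the compact spatial embedding $H^1(\Omega)\hookrightarrow L^2(\Omega)$.
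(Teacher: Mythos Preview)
Your proof is correct. Part~(a) matches the paper's argument: the paper also obtains the identity by integration by parts, separating the spatial Green's formula from the temporal fundamental theorem, using $\partial_t u - \div_\x(\A\nabla_\x u) = 0$ together with $u(0,\cdot)=0$, and citing Costabel for the rigorous justification in the present regularity class.

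For part~(b), however, your route genuinely differs. The paper applies the Peetre--Tartar lemma on the space $X = \{v \in L^2(I;H^1(\Omega)) \cap H^1(I;H^{-1}(\Omega)) : \partial_t v - \div_\x(\A\nabla_\x v) = 0\}$, with $A : v \mapsto (\A^{1/2}\nabla_\x v, \tfrac{1}{\sqrt{2}} v(T,\cdot))$ injective and the inclusion $S : v \mapsto v \in L^2(Q)$ compact via the Aubin--Lions--Simon lemma (this is the origin of the lemma's label). Your argument is instead direct and constructive: you exploit $u(0,\cdot)=0$ for a Poincar\'e-in-time bound $\norm{u}{L^2(I;H^{-1}(\Omega))} \lesssim T\,\norm{\partial_t u}{L^2(I;H^{-1}(\Omega))}$ and then interpolate with Ehrling's inequality. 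Both approaches ultimately rest on Rellich compactness of $H^1(\Omega)\hookrightarrow L^2(\Omega)$ (the paper through Aubin--Lions--Simon, you through Ehrling). Your version is more explicit and in fact shows that $\norm{\A^{1/2}\nabla_\x u}{L^2(Q)}^2$ alone already dominates $\norm{\u}{U}^2$ on $\ker\mathcal{G}$, so the term $\tfrac{1}{2}\norm{u(T,\cdot)}{L^2(\Omega)}^2$ is not needed for the lower bound; the paper's Peetre--Tartar version applies on the larger space $X$ (no initial condition imposed) but then requires the terminal value $v(T,\cdot)$ to secure injectivity of~$A$.
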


\begin{proof}
The kernel of $\mathcal{G}$ is given by the set 
\begin{align*}
	{\rm ker}\,\mathcal{G} = \set{(u,\ssigma)\in U}{\partial_t u - \div_\x (\A\nabla_\x u) = 0 \wedge \ssigma = -\A\nabla_\x u \wedge u(0,\cdot) = 0}.
\end{align*}
Let $\u = (u,\ssigma)\in {\rm ker}\,\mathcal{G}$. 
Then, boundedness~\eqref{eq:lemma_gs21} shows that
\begin{align}\label{eq:U2V}
	\norm{u}{L^2(I; H^1(\Omega))\cap H^1(I;H^{-1}(\Omega))}^2 \eqsim \norm{\u}{U}^2 
	\eqsim \norm{u}{L^2(Q)}^2 + \norm{\nabla_\x u}{L^2(Q)}^2.
\end{align}
Note that $(\A\nabla_\x u) \cdot \n_\x = -\ssigma\cdot\n_\x$.
Integration by parts similar to~\cite[Equations~(2.35)\&(2.52)]{costabel90} together with $\partial_t u - \div_\x (\A\nabla_\x u) = 0$ and $u(0,\cdot)=0$ gives that
\begin{align*}
	\dual{u|_\Sigma}{-\ssigma\cdot\n_\x}_\Sigma  
	&=\int_Q (\A\nabla_\x u)\cdot \nabla_\x u \d\x \d t - \int_Q \big(\partial_t u - \div_\x(\A\nabla_\x u)\big)\,  u \d\x \d t +\int_Q \partial_t u  \,u \d\x \d t
	\\
	&= \norm{\A^{1/2}\nabla_\x u}{L^2(Q)}^2 + \tfrac12\norm{u(T,\cdot)}{L^2(\Omega)}^2. 
\end{align*}

Next, we want to apply a standard compactness argument, more precisely, the Peetre--Tartar lemma~\cite[Lemma~A.20]{eg21a}. 
In the notation from there (with $S$ instead of $T$, being already the end time point), we set 
\begin{align*}
	X&:=\set{v\in L^2(I;H^1(\Omega))\cap H^1(I;H^{-1}(\Omega))}{\partial_t v - \div_\x(\A\nabla_\x v) = 0},
	\\ 
	Y&:=L^2(Q)\times L^2(\Omega),
	\\
	Z&:=L^2(Q),
\end{align*}
and 
\begin{align*}
	&A:X\to Y, \quad v\mapsto \big(\A^{1/2}\nabla_\x v, \tfrac{1}{\sqrt2} v(T,\cdot)\big),
	\\
	&S:X\to Z, \quad v\mapsto v.
\end{align*}
Again by~\eqref{eq:lemma_gs21}, it holds that 
\begin{align*}
	\norm{v}{X} \lesssim \norm{v}{L^2(Q)} + \norm{\A^{1/2}\nabla_\x v}{L^2(Q)} \le \norm{Av}{Y} + \norm{Sv}{Z} \quad\text{for all }v\in X.
\end{align*}
Moreover, the operator $A$ is bounded owing to the continuous embedding~\eqref{eq:V2C0}.
It is also injective:
Indeed, if $Av=(0,0)$, then $\nabla_\x v = 0$ and thus $\partial_t v = \div_\x \A \nabla_\x v = 0$ so that $v$ is constant.
As $v(T,\cdot)=0$, we conclude that $v=0$. 
Moreover, the operator $S$ is compact. 
This follows from the continuous inclusion $X\hookrightarrow L^2(I;H^1(\Omega))\cap H^1(I;H^{-1}(\Omega))$ and the Aubin--Lions--Simons lemma~\cite[Theorem~II.5.16]{bf12} stating that the inclusion $L^2(I;H^1(\Omega))\cap H^1(I;H^{-1}(\Omega))$ $\hookrightarrow L^2(Q)$ is compact. 
Overall, the Peetre--Tartar lemma yields that
\begin{align*}
	\norm{v}{X} \lesssim \norm{Av}{Y} \quad\text{for all }v\in X.
\end{align*}
Since $\norm{Au}{Y}^2 = \norm{\A^{1/2}\nabla_\x u}{L^2(Q)}^2 + \tfrac12\norm{u(T,\cdot)}{L^2(\Omega)}^2$, we conclude the proof with \eqref{eq:U2V}.
\end{proof}

For the proof of coercivity of the bilinear forms $c_1$ and $c_2$, we restrict to the case $\A={\bf Id}$.

\begin{lemma}\label{lem:coercivity on kernel}
Suppose that $\A = {\bf Id}$.
Then, the continuous bilinear forms $c_1:U\times U\to \R$ and $c_2:U\times U\to\R$ are coercive on the kernel of $\mathcal{G}$, i.e., 
\begin{align*}
	c_i(\u,\u) \gtrsim \norm{\u}{U}^2
	\quad\text{for }i\in\{1,2\} \text{ and all }\u\in {\rm ker}\,\mathcal{G}.
\end{align*}
\end{lemma}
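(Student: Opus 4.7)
The plan is to reduce both quadratic forms $c_i(\u,\u)$ for $\u=(u,\ssigma)\in\ker\mathcal{G}$ to the single boundary duality pairing $\dual{u|_\Sigma}{-\ssigma\cdot\n_\x}_\Sigma$ controlled by Lemma~\ref{lem:Aubin-Lions-Simon}. The key observation that makes this work is that any $\u\in\ker\mathcal{G}$ with $\A={\bf Id}$ satisfies $\ssigma=-\nabla_\x u$, the homogeneous heat equation $\partial_t u-\Delta_\x u=0$ on $Q$, and $u(0,\cdot)=0$; together with the regularity in~\eqref{eq:lemma_gs21}, these are exactly the hypotheses under which the representation formula~\eqref{eq:representation} applies, giving
\begin{align*}
u = \widetilde{\mathcal{V}}(-\ssigma\cdot\n_\x) - \widetilde{\mathcal{K}}(u|_\Sigma)\quad\text{on }Q.
\end{align*}
The restriction $\A={\bf Id}$ enters precisely here, since~\eqref{eq:representation} is formulated for the Laplacian only; for a general diffusion matrix the interior PDE does not match the kernel of the heat potentials.

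\textbf{Deriving two Calderón-type identities.} Applying the interior Dirichlet trace via~\eqref{eq:single_operator}--\eqref{eq:double_operator} yields $u|_\Sigma = \mathcal{V}(-\ssigma\cdot\n_\x) - (\mathcal{K}-\tfrac12)(u|_\Sigma)$, which I rearrange to
\begin{align*}
\mathcal{V}(\ssigma\cdot\n_\x) + (\mathcal{K}-\tfrac12)(u|_\Sigma) = -u|_\Sigma.
\end{align*}
Analogously, applying the interior normal derivative via~\eqref{eq:adjdouble_operator}--\eqref{eq:hypsing_operator} gives $-\ssigma\cdot\n_\x = (\mathcal{N}+\tfrac12)(-\ssigma\cdot\n_\x) + \mathcal{W}(u|_\Sigma)$, i.e.,
\begin{align*}
\mathcal{W}(u|_\Sigma) + (\mathcal{N}+\tfrac12)(-\ssigma\cdot\n_\x) = -\ssigma\cdot\n_\x.
\end{align*}

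\textbf{Conclusion via Lemma~\ref{lem:Aubin-Lions-Simon}.} Substituting these two identities into the definitions of $c_1$ and $c_2$ with $\v=\u$, both quadratic forms collapse to the same quantity
\begin{align*}
c_i(\u,\u) = \dual{u|_\Sigma}{-\ssigma\cdot\n_\x}_\Sigma,\qquad i\in\{1,2\},
\end{align*}
and Lemma~\ref{lem:Aubin-Lions-Simon} immediately yields $c_i(\u,\u)\gtrsim\norm{\u}{U}^2$.

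\textbf{Where the work lies.} The proof is short once one spots the reduction, and I expect no routine computation to be the obstacle. The conceptual content is the recognition that the Calderón projector identities, evaluated on a kernel element, leave a residue equal to exactly $-u|_\Sigma$ (resp.\ $-\ssigma\cdot\n_\x$) on the left-hand side of $c_i$; pairing with the corresponding test datum $\ssigma\cdot\n_\x$ (resp.\ $u|_\Sigma$) produces the symmetric boundary duality already controlled by Lemma~\ref{lem:Aubin-Lions-Simon}. No use is made here of the coercivity of $\mathcal{V}$ or $\mathcal{W}$, nor of the parameter $\alpha$, nor of any compactness argument beyond the one already absorbed into Lemma~\ref{lem:Aubin-Lions-Simon}; these ingredients will enter only in the subsequent step combining $b_\alpha$ with $c_i$ on all of $U$.
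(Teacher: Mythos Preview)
Your proof is correct and follows essentially the same approach as the paper: use the interior representation formula~\eqref{eq:representation} for kernel elements (valid because $\A={\bf Id}$ forces $\ssigma=-\nabla_\x u$ and the homogeneous heat equation), take its Dirichlet and Neumann traces to obtain the two Calder\'on-type identities, substitute into $c_1$ and $c_2$ to reduce both to $\dual{u|_\Sigma}{-\ssigma\cdot\n_\x}_\Sigma$, and conclude with Lemma~\ref{lem:Aubin-Lions-Simon}. Your write-up is in fact slightly more explicit than the paper's in noting the regularity~\eqref{eq:lemma_gs21} needed to invoke~\eqref{eq:representation}.
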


\begin{proof}
Let $\u = (u,\ssigma)\in {\rm ker}\,\mathcal{G}$. 
Since $\partial_t u - \Delta_\x u = 0$ with $u(0,\cdot) = 0$, the representation formula~\eqref{eq:representation} restricted to $\Sigma$ and the identities~\eqref{eq:single_operator}--\eqref{eq:double_operator} yield that 
\begin{align*}
	\mathcal{V}(\nabla_\x u \cdot \n_\x) + (\tfrac12-\mathcal{K})(u|_\Sigma) = u|_\Sigma.
\end{align*}
Note that $\nabla_\x u \cdot \n_\x = -\ssigma\cdot\n_\x$.
This yields that $c_1(\u,\u) = \dual{u|_\Sigma}{-\ssigma\cdot\n_\x}_\Sigma$. 
Similarly, taking the normal derivative of the representation formula~\eqref{eq:representation} and the identities~\eqref{eq:adjdouble_operator}--\eqref{eq:hypsing_operator} yield that 
\begin{align*}
	\mathcal{W}(u|_\Sigma) + (\mathcal{N} + \tfrac12)(\nabla_x u\cdot\n_\x) 
	= \nabla_x u\cdot\n_\x,
\end{align*}
and thus, $c_2(\u,\u) = \dual{u|_\Sigma}{-\ssigma\cdot\n_\x}_\Sigma$. 
In both cases, Lemma~\ref{lem:Aubin-Lions-Simon} concludes the proof.
\end{proof}

Obviously, the bilinear form $c_3$, being the sum of the two bilinear forms $c_1$ and $c_2$ is also coercive on the kernel of $\mathcal{G}$, provided that $\A = {\bf Id}$. 
Using a different proof, it turns out that the bilinear form $c_3$ is coercive on the kernel of $\mathcal{G}$ even without any restrictions on the diffusion matrix $\A$ in the interior space-time cylinder~$Q$.

\begin{lemma}\label{lem:coercivity on kernel 3}
The continuous bilinear form $c_3:U\times U\to \R$ is coercive on the kernel of $\mathcal{G}$, i.e., 
\begin{align*}
	c_3(\u,\u) \gtrsim \norm{\u}{U}^2
	\quad\text{for all }\u\in {\rm ker}\,\mathcal{G}.
\end{align*}
\end{lemma}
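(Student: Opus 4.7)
The plan is to rewrite $c_3(\u,\u)$ as the sum of two non-negative quantities: (i) the Calderón-type quadratic form on the left-hand side of \eqref{eq:Calderon_coercive}, and (ii) the boundary pairing $\dual{u|_\Sigma}{-\ssigma\cdot\n_\x}_\Sigma$ which is controlled via Lemma~\ref{lem:Aubin-Lions-Simon}. Crucially, this decomposition works for general $\A$, because unlike the first two couplings it does not rely on the representation formula \eqref{eq:representation} (which requires $\A={\bf Id}$ on the interior), but only on algebraic manipulation and the abstract Calderón coercivity of the boundary integral operators.

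Fix $\u=(u,\ssigma)\in {\rm ker}\,\mathcal{G}$ and abbreviate $\phi := \ssigma\cdot \n_\x\in H^{-1/2,-1/4}(\Sigma)$ and $\psi := u|_\Sigma \in H^{1/2,1/4}(\Sigma)$. Substituting these into the definition of $c_3$ and expanding the factors of $\tfrac12$ using symmetry of the $L^2$-pairing yields
\begin{align*}
    c_3(\u,\u) = \dual{\mathcal{V}\phi}{\phi}_\Sigma + \dual{\mathcal{W}\psi}{\psi}_\Sigma + \dual{\mathcal{K}\psi}{\phi}_\Sigma - \dual{\mathcal{N}\phi}{\psi}_\Sigma - \dual{\phi}{\psi}_\Sigma.
\end{align*}
The sign-flip trick is to introduce $\widetilde\phi := -\phi$, so that every occurrence of $\phi$ in the first four terms picks up a minus sign in the appropriate place:
\begin{align*}
    c_3(\u,\u) = \dual{\mathcal{V}\widetilde\phi}{\widetilde\phi}_\Sigma - \dual{\mathcal{K}\psi}{\widetilde\phi}_\Sigma + \dual{\mathcal{N}\widetilde\phi}{\psi}_\Sigma + \dual{\mathcal{W}\psi}{\psi}_\Sigma + \dual{\widetilde\phi}{\psi}_\Sigma.
\end{align*}
The first four terms are exactly the quadratic form associated with the block operator of~\eqref{eq:Calderon_coercive} applied to $(\widetilde\phi,\psi)$ and are therefore non-negative (in fact bounded below by $\|\widetilde\phi\|_{H^{-1/2,-1/4}(\Sigma)}^2+\|\psi\|_{H^{1/2,1/4}(\Sigma)}^2$).

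For the remaining term, note that $\dual{\widetilde\phi}{\psi}_\Sigma = \dual{u|_\Sigma}{-\ssigma\cdot \n_\x}_\Sigma$, which is precisely the quantity bounded below by $\|\u\|_U^2$ in Lemma~\ref{lem:Aubin-Lions-Simon}. Dropping the (non-negative) Calderón piece and applying that lemma yields the desired estimate $c_3(\u,\u)\gtrsim \|\u\|_U^2$. The main technical hurdle is setting up the decomposition in the first place --- once the correct sign change $\phi\leftrightarrow -\phi$ is identified, the calculation is routine, and, importantly, nothing in this argument touches the diffusion matrix $\A$ on the interior, so no restriction $\A={\bf Id}$ needs to be imposed.
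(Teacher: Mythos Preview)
Your proof is correct and is essentially identical to the paper's own argument: the paper also rewrites $c_3(\u,\u)$ as the Calder\'on quadratic form~\eqref{eq:Calderon_coercive} applied to $(-\ssigma\cdot\n_\x,\,u|_\Sigma)$ plus the pairing $\dual{-\ssigma\cdot\n_\x}{u|_\Sigma}_\Sigma$, drops the former by non-negativity, and invokes Lemma~\ref{lem:Aubin-Lions-Simon}. The only cosmetic difference is that the paper writes the decomposition directly in block-matrix form rather than via your intermediate substitution $\widetilde\phi=-\phi$; one small wording quibble is that the symmetry $\dual{\psi}{\phi}_\Sigma=\dual{\phi}{\psi}_\Sigma$ you use is by the paper's definition of the duality pairing, not by an ``$L^2$-pairing'' argument.
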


\begin{proof}
The coercivity~\eqref{eq:Calderon_coercive} yields the existence of some constant $C>0$ such that for \emph{any} $\u = (u,\ssigma) \in U$,
\begin{align*}
	c_3(\u,\u) &= \dual{\mathcal{V} (\ssigma \cdot \n_\x) + (\mathcal{K} - \tfrac12 ) (u|_\Sigma)}{\ssigma \cdot \n_\x}_\Sigma + \dual{\mathcal{W}(u|_\Sigma) + (\mathcal{N}+\tfrac12)(-\ssigma\cdot\n_\x)}{u|_\Sigma}_\Sigma \\
	&=(-\ssigma\cdot\n_\x,u|_\Sigma) 
	\begin{pmatrix}\mathcal{V} & -\mathcal{K} \\ \mathcal{N} & \mathcal{W} \end{pmatrix} 
	\begin{pmatrix}-\ssigma\cdot\n_\x \\ u|_\Sigma\end{pmatrix}
	+ \dual{-\ssigma\cdot\n_\x}{u|_\Sigma}_\Sigma
	\\
	&\ge C \big(\norm{\ssigma\cdot\n_\x}{H^{-1/2,-1/4}(\Sigma)}^2 + \norm{u|_\Sigma}{H^{1/2,1/4}(\Sigma)}^2\big)
	+ \dual{-\ssigma\cdot\n_\x}{u|_\Sigma}_\Sigma
	\\
	&\ge \dual{-\ssigma\cdot\n_\x}{u|_\Sigma}_\Sigma.
\end{align*}
Lemma~\ref{lem:Aubin-Lions-Simon} concludes the proof.
\end{proof}

With Lemma~\ref{lem:coercivity on kernel} and Lemma~\ref{lem:coercivity on kernel 3},
coercivity of $b_\alpha+c_i$, $i\in\{1,2,3\}$, follows from the following abstract theorem, which is also implicitly used in~\cite{fhk17}.

\begin{theorem}\label{thm:coercivity}
Let $U$ and $L$ be Hilbert spaces and $\mathcal{G}:U\to L$ a continuous linear mapping such that $\mathcal{G}|_{U_D}$ is even a continuous linear isomorphism onto $L$ for some closed subspace $U_D\subseteq U$. 
For $\alpha>0$, define the continuous bilinear form $b_\alpha:U\times U$ by $b_\alpha(\u,\v) := \alpha\dual{\mathcal{G}\u}{\mathcal{G}\v}_L$. 
Moreover, let $c:U\times U$ be a second continuous bilinear form that is coercive on ${\rm ker}\,\mathcal{G}$. 
Then, there exists $\alpha_0>0$ such that for all $\alpha\ge \alpha_0$, $b_\alpha+c$ is coercive on $U$, i.e.,
\begin{align*}
	b_\alpha(\u,\u) + c(\u,\u) \gtrsim \norm{\u}{U}^2 \quad\text{for all }\u\in U.
\end{align*}
\end{theorem}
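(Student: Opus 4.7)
The plan is to exploit the fact that the isomorphism hypothesis on $\mathcal{G}|_{U_D}$ gives a topological direct sum decomposition $U = \ker\mathcal{G} \oplus U_D$, and then to use this splitting together with Young's inequality to transfer coercivity from $\ker\mathcal{G}$ to all of $U$, paying for the cross and mixed terms by choosing $\alpha$ sufficiently large.

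More concretely, the first step is to define, for every $\u \in U$, the element $\u_D := (\mathcal{G}|_{U_D})^{-1}(\mathcal{G}\u) \in U_D$ and $\u_0 := \u - \u_D$. Since $\mathcal{G}\u_D = \mathcal{G}\u$, one has $\u_0 \in \ker\mathcal{G}$. Boundedness of $(\mathcal{G}|_{U_D})^{-1}$ yields a constant $C_1>0$ (independent of $\u$) with $\norm{\u_D}{U} \le C_1 \norm{\mathcal{G}\u}{L}$, and the triangle inequality then gives $\norm{\u_0}{U} \lesssim \norm{\u}{U}$. Crucially, $\norm{\u_D}{U}^2 \le C_1^2 \norm{\mathcal{G}\u}{L}^2 = (C_1^2/\alpha)\, b_\alpha(\u,\u)$, so the $b_\alpha$-contribution controls $\u_D$ up to a factor that shrinks as $\alpha$ grows.

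The second step is to expand
\begin{align*}
(b_\alpha + c)(\u,\u) = \alpha\norm{\mathcal{G}\u}{L}^2 + c(\u_0,\u_0) + c(\u_0,\u_D) + c(\u_D,\u_0) + c(\u_D,\u_D).
\end{align*}
Coercivity of $c$ on $\ker\mathcal{G}$ gives $c(\u_0,\u_0) \ge C_2 \norm{\u_0}{U}^2$ for some $C_2>0$, while continuity of $c$ bounds the mixed terms by $C_3 \norm{\u_0}{U}\norm{\u_D}{U}$ and $c(\u_D,\u_D)$ by $C_3 \norm{\u_D}{U}^2$. Applying Young's inequality with parameter $\varepsilon := C_2/(2C_3)$ absorbs the cross terms into $(C_2/2)\norm{\u_0}{U}^2$ at the cost of a term $C_4 \norm{\u_D}{U}^2$. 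The result is
\begin{align*}
(b_\alpha + c)(\u,\u) \ge \alpha\norm{\mathcal{G}\u}{L}^2 + \tfrac{C_2}{2}\norm{\u_0}{U}^2 - C_5 \norm{\u_D}{U}^2.
\end{align*}

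The third step is the threshold argument: using $\norm{\u_D}{U}^2 \le C_1^2 \norm{\mathcal{G}\u}{L}^2$ and choosing $\alpha_0 := 2 C_1^2 C_5$, the term $-C_5\norm{\u_D}{U}^2$ is absorbed by half of $\alpha\norm{\mathcal{G}\u}{L}^2$ for all $\alpha \ge \alpha_0$. Combining with $\norm{\u}{U}^2 \lesssim \norm{\u_0}{U}^2 + \norm{\u_D}{U}^2 \lesssim \norm{\u_0}{U}^2 + \norm{\mathcal{G}\u}{L}^2$ yields the claim.

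There is no deep obstacle in this argument; it is a standard splitting/absorption scheme that appears (as noted) implicitly in \cite{fhk17}. The only care needed is in the bookkeeping of constants to verify that a single threshold $\alpha_0$ works independently of $\u$, which is guaranteed by the uniformity of the constants $C_1,\dots,C_5$ furnished by the open mapping theorem (for $(\mathcal{G}|_{U_D})^{-1}$) and by continuity/coercivity of $c$.
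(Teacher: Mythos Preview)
Your proof is correct and follows essentially the same approach as the paper: both use the topological splitting $U = U_D \oplus \ker\mathcal{G}$ induced by the projection $(\mathcal{G}|_{U_D})^{-1}\mathcal{G}$, expand $c$ bilinearly along this splitting, apply coercivity of $c$ on $\ker\mathcal{G}$ and continuity of $c$ on the cross terms, and then absorb via Young's inequality by choosing $\alpha$ large. The only cosmetic difference is that the paper writes the chain starting from $\norm{\u}{U}^2$ and working toward $b_\alpha(\u,\u)+c(\u,\u)$, whereas you start from $(b_\alpha+c)(\u,\u)$ and work toward $\norm{\u}{U}^2$.
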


\begin{proof}
The operator $\mathcal{G}$ induces the decomposition $U=U_D+{\rm ker}\,\mathcal{G}$:
Indeed, each $\u\in U$ can be uniquely written as $\u = \mathcal{G}|_{U_D}^{-1}\mathcal{G}\u + (\u - \mathcal{G}|_{U_D}^{-1}\mathcal{G}\u)$. 
We abbreviate the corresponding projection with range $U_D$ and kernel ${\rm ker}\,\mathcal{G}$ by $\mathcal{P}:=\mathcal{G}|_{U_D}^{-1}\mathcal{G}$. 
Then, the assertion follows from the assumptions and the elementary calculations 
\begin{align*}
	\norm{\u}{U}^2 &\lesssim \norm{\mathcal{P}\u}{U}^2 + \norm{\u-\mathcal{P}\u}{U}^2
	\\
	&\lesssim \norm{\mathcal{G} \u}{L}^2 + c(\u-\mathcal{P}\u,\u-\mathcal{P}\u)
	\\
	&= \norm{\mathcal{G} \u}{L}^2 + c(\u,\u) - c(\u,\mathcal{P}\u) - c(\mathcal{P}\u,\u) + c(\mathcal{P}\u,\mathcal{P}\u)
	\\
	&\lesssim \norm{\mathcal{G} \u}{L}^2 + c(\u,\u) + \norm{\u}{U}\norm{\mathcal{P}\u}{U} + \norm{\mathcal{P}\u}{U}\norm{\mathcal{P}\u}{U}
	\\
	&\lesssim \norm{\mathcal{G} \u}{L}^2 + c(\u,\u) + \norm{\u}{U}\norm{\mathcal{G}\u}{L}
	\\
	&\le (1+\tfrac{\delta^{-1}}{2})\norm{\mathcal{G} \u}{L}^2 + c(\u,\u) + \tfrac{\delta}{2}\norm{\u}{U}^2.
\end{align*}
Choosing $\delta>0$ sufficiently small, this proves coercivity for suitably chosen $\alpha_0$.
\end{proof}

\begin{remark}
Obviously, coercivity of the bilinear form $c$ on the kernel of $\mathcal{G}$ is also necessary for coercivity of the bilinear form $b_\alpha+c$. 
\end{remark}

\subsection{Equivalence of couplings to transmission problem}\label{sec:equivalence}
It is straight-forward to see that the first two couplings~\eqref{eq:transmission_weak} and \eqref{eq:transmission_weak2} are equivalent to the transmission problem~\eqref{eq:transmission}, even without any restrictions on the diffusion matrix $\A$; see Theorem~\ref{thm:equivalence} and Remark~\ref{rem:equivalence} below. 
This is not evident for the third coupling~\eqref{eq:transmission_weak3}, which involves all four boundary integral operators. 
However, we have already seen that \eqref{eq:transmission} implies each of the couplings~\eqref{eq:transmission_weak}, \eqref{eq:transmission_weak2}, and \eqref{eq:transmission_weak3}, and that the latter are uniquely solvable provided that $\alpha$ is chosen sufficiently large (see Theorem~\ref{thm:coercivity}) and $\A={\bf Id}$ in case of \eqref{eq:transmission_weak} and \eqref{eq:transmission_weak2}. 
If we knew that \eqref{eq:transmission} has indeed a unique weak solution, the desired equivalence would thus follow at once. 
While the problem at hand has been considered before, we were unable to find a rigorous proof of unique solvability. 
Therefore, we address this seemingly open question in the following.

We introduce yet another (though practically unfeasible) coupling, starting from the first-order system~\eqref{eq:transmission_interior} for $\u=(u,\ssigma)\in U$ in the interior and the boundary integral equation~\eqref{eq:transmission_exterior}, i.e., the representation~\eqref{eq:interior2exterior} of the exterior solution $u^{\rm ext}$ restricted to $\Sigma$. 
Note that~\eqref{eq:transmission_exterior} is equivalent to 
\begin{align}\label{eq:transmission_exterior4}
	-\ssigma \cdot \n_\x + \mathcal{V}^{-1}(\tfrac12-\mathcal{K}) (u|_\Sigma) 
	= \phi_N + \mathcal{V}^{-1}(\tfrac12-\mathcal{K}) u_D,
\end{align}
or, owing to $\mathcal{G}_D(U) = L\times H^{1/2,1/4}(\Sigma)$,
\begin{align*}
	c_4(\u,\v)
	:=\dual{-\ssigma \cdot \n_\x + \mathcal{V}^{-1}(\tfrac12-\mathcal{K}) (u|_\Sigma)}{v|_\Sigma}_\Sigma
	= \dual{\phi_N + \mathcal{V}^{-1}(\tfrac12-\mathcal{K}) u_D}{v|_\Sigma}_\Sigma
	\\
	\text{for all }\v=(v,\ttau)\in U.
\end{align*}
Combining~\eqref{eq:transmission_interior2} and~\eqref{eq:transmission_exterior}, we conclude that any solution of~\eqref{eq:transmission} with $\u=(u,-\A\nabla_\x u)$ satisfies
\begin{align} \notag
	& b_\alpha(\u,\v) + c_4(\u,\v) 
	= \alpha \big[\dual{f}{\div \v}_{L^2(Q)} + \dual{u_0}{v(0,\cdot)}_{L^2(\Omega)}\big]
	+ \dual{\phi_N + \mathcal{V}^{-1}(\tfrac12-\mathcal{K}) u_D}{v|_\Sigma}_\Sigma
	\\ \label{eq:transmission_weak4} &\hspace{11cm}
	\text{for all }\v = (v,\ttau)\in U.
\end{align}
By considering $\v\in U_D$ and recalling that $\mathcal{G}(U_D)=L$, one sees that \eqref{eq:transmission_weak4} conversely implies \eqref{eq:transmission_interior} and thus also \eqref{eq:transmission_exterior4} and the equivalent~\eqref{eq:transmission_exterior}.
Note that the left-hand side and the right-hand side in \eqref{eq:transmission_weak4} are linear and continuous with respect to inputs $\u,\v\in U$.

We want to show that the combined bilinear form $b_\alpha+c_4:U\times U\to\R$ is coercive for sufficiently large $\alpha>0$ and \emph{arbitrary} diffusion matrix $\A$ in the interior.
As for the other bilinear forms $b_\alpha+c_i$, $i\in\{1,2,3\}$, this directly follows from the following lemma; see Theorem~\ref{thm:coercivity}.

\begin{lemma}\label{lem:coercivity on kernel 4}
The continuous bilinear form $c_4:U\times U\to\R$ is coercive on the kernel of $\mathcal{G}$, i.e., 
\begin{align*}
	c_4(\u,\u) \gtrsim \norm{\u}{U}^2
	\quad\text{for all }\u\in {\rm ker}\,\mathcal{G}.
\end{align*}
\end{lemma}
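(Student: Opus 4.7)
The plan is to split $c_4(\u,\u)$ into an interior boundary pairing and an exterior Dirichlet-to-Neumann pairing, control the first via the already available Lemma~\ref{lem:Aubin-Lions-Simon}, and show that the second is nonnegative by reconstructing the exterior heat solution as a potential and integrating by parts on $Q^{\rm ext}$. For $\u=(u,\ssigma)\in\ker\mathcal{G}$, so that $\ssigma=-\A\nabla_\x u$, $\partial_t u-\div_\x(\A\nabla_\x u)=0$ and $u(0,\cdot)=0$, the definition of $c_4$ gives
\begin{equation*}
c_4(\u,\u)=\dual{-\ssigma\cdot\n_\x}{u|_\Sigma}_\Sigma+\dual{\mathcal{V}^{-1}(\tfrac12-\mathcal{K})(u|_\Sigma)}{u|_\Sigma}_\Sigma,
\end{equation*}
and Lemma~\ref{lem:Aubin-Lions-Simon} delivers the bound $\gtrsim\|\u\|_U^2$ for the first summand, so the whole task reduces to proving nonnegativity of the second summand.

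For this I would build an auxiliary exterior heat function. With $w:=-\mathcal{V}^{-1}(\tfrac12-\mathcal{K})(u|_\Sigma)\in H^{-1/2,-1/4}(\Sigma)$, set $\tilde u^{\rm ext}:=\widetilde{\mathcal{K}}(u|_\Sigma)-\widetilde{\mathcal{V}}(w)$ on $Q^{\rm ext}$. By the mapping properties recorded in Section~\ref{sec:bios}, $\tilde u^{\rm ext}$ is smooth on $Q^{\rm ext}$, satisfies the homogeneous heat equation there with $\tilde u^{\rm ext}(0,\cdot)=0$, and the jump relations \eqref{eq:single_operator}--\eqref{eq:double_operator} give the exterior trace $(\mathcal{K}+\tfrac12)(u|_\Sigma)-\mathcal{V}(w)=u|_\Sigma$ by the very definition of $w$. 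Moreover, \eqref{eq:representation_radiation} together with Remark~\ref{rem:radiation} ensures that both $\tilde u^{\rm ext}(\cdot,\x)$ and $\nabla_\x\tilde u^{\rm ext}(\cdot,\x)$ are $\OO(|\x|^{-\mu})$ for every $\mu\ge 0$ as $|\x|\to\infty$. Applying the representation formula \eqref{eq:exterior_solution} to $\tilde u^{\rm ext}$ and using injectivity of $\widetilde{\mathcal{V}}$ (which follows from the coercivity \eqref{eq:V_coercive} of $\mathcal{V}$) then identifies $\partial_{\n_\x}\tilde u^{\rm ext}=w$, so the remaining summand equals $\dual{-\partial_{\n_\x}\tilde u^{\rm ext}}{\tilde u^{\rm ext}|_\Sigma}_\Sigma$.

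To show nonnegativity of this pairing, I would test the homogeneous heat equation for $\tilde u^{\rm ext}$ against $\tilde u^{\rm ext}$ itself on the bounded slab $Q^{\rm ext}_R$ and integrate by parts. Observing that the outward normal of $\Omega^{\rm ext}_R$ on $\Sigma$ is $-\n_\x$, this yields
\begin{equation*}
0=\tfrac12\|\tilde u^{\rm ext}(T,\cdot)\|_{L^2(\Omega^{\rm ext}_R)}^2+\|\nabla_\x\tilde u^{\rm ext}\|_{L^2(Q^{\rm ext}_R)}^2+\dual{\partial_{\n_\x}\tilde u^{\rm ext}}{\tilde u^{\rm ext}|_\Sigma}_\Sigma-\int_{\Sigma_R}\partial_{\n_{R,\x}}\tilde u^{\rm ext}\,\tilde u^{\rm ext}\d\y\d s.
\end{equation*}
The decay estimates above, combined with $|\Gamma_R|\simeq R^{d-1}$, force the $\Sigma_R$-boundary integral to vanish as $R\to\infty$, and passing to the limit leaves
\begin{equation*}
\dual{-\partial_{\n_\x}\tilde u^{\rm ext}}{\tilde u^{\rm ext}|_\Sigma}_\Sigma=\|\nabla_\x\tilde u^{\rm ext}\|_{L^2(Q^{\rm ext})}^2+\tfrac12\|\tilde u^{\rm ext}(T,\cdot)\|_{L^2(\Omega^{\rm ext})}^2\ge 0,
\end{equation*}
as required.

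The main obstacle is precisely this integration by parts on the unbounded domain $Q^{\rm ext}$: making the spherical boundary terms vanish in the limit $R\to\infty$ is the one step that genuinely exploits the rapid decay supplied by the radiation condition via Remark~\ref{rem:radiation}. The subsidiary step of identifying $\partial_{\n_\x}\tilde u^{\rm ext}=w$ through the self-referential representation formula is short but indispensable for translating the abstract boundary-integral expression into a form to which Green's identity applies.
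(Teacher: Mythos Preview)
Your proof is correct and follows essentially the same route as the paper: split $c_4(\u,\u)$ into the interior pairing handled by Lemma~\ref{lem:Aubin-Lions-Simon} and the Dirichlet-to-Neumann term $\dual{\mathcal{V}^{-1}(\tfrac12-\mathcal{K})(u|_\Sigma)}{u|_\Sigma}_\Sigma$, then realize the latter as $\dual{-\partial_{\n_\x}\tilde u^{\rm ext}}{\tilde u^{\rm ext}|_\Sigma}_\Sigma$ for the exterior potential $\tilde u^{\rm ext}=\widetilde{\mathcal{K}}(u|_\Sigma)-\widetilde{\mathcal{V}}(w)$ and show nonnegativity via Green's identity on $Q_R^{\rm ext}$ with $R\to\infty$. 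The paper's argument differs only cosmetically (it calls your $w$ by $\phi$ and your $\tilde u^{\rm ext}$ by $w$, and phrases the integration by parts as another application of Lemma~\ref{lem:Aubin-Lions-Simon} on $Q_R^{\rm ext}$), and it likewise uses the exterior representation formula together with injectivity of $\mathcal{V}$ to identify the normal derivative.
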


\begin{proof}
Let $\u = (u,\ssigma)\in {\rm ker}\,\mathcal{G}$. 
Then, Lemma~\ref{lem:Aubin-Lions-Simon} yields the existence of a constant $C>0$ such that
\begin{align*}
	c_4(\u,\u)
	&=\dual{-\ssigma \cdot \n_\x + \mathcal{V}^{-1}(\tfrac12-\mathcal{K}) (u|_\Sigma)}{u|_\Sigma}_\Sigma
	\\
	&= \norm{\A^{1/2}\nabla_\x u}{L^2(Q)}^2 + \tfrac12\norm{u(T,\cdot)}{L^2(\Omega)}^2
	+ \dual{\mathcal{V}^{-1}(\tfrac12-\mathcal{K}) (u|_\Sigma)}{u|_\Sigma}_\Sigma
	\\
	&\ge C \norm{\u}{U}^2 
	+ \dual{\mathcal{V}^{-1}(\tfrac12-\mathcal{K}) (u|_\Sigma)}{u|_\Sigma}_\Sigma.
\end{align*}

It remains to show that the second term is nonnegative. 
To this end, we consider the function 
\begin{align*}
	w:=\widetilde{\mathcal{K}}(u|_\Sigma) - \widetilde{\mathcal{V}}\phi 
	\quad \text{for } 
	\phi:=\mathcal{V}^{-1}(\mathcal{K}-\tfrac12) (u|_\Sigma)
\end{align*}
on the exterior space-time cylinder $Q^{\rm ext}$. 
By restricting $w$ to $\Sigma$, we see that 
\begin{align*}
	w|_\Sigma = (\mathcal{K}+\tfrac12)(u|_\Sigma) - \mathcal{V}\phi = u|_\Sigma.
\end{align*}
On the other hand, from $\partial_t w - \Delta_\x w = 0$, $w(0,\cdot)=0$, and the fact that $w$ satisfies the radiation condition~\eqref{eq:radiation} (with $u^{\rm ext}$ replaced by $w$; see~\eqref{eq:representation_radiation}), we get as in~\eqref{eq:exterior_solution} the representation formula $w=\widetilde{\mathcal{K}}(w|_\Sigma) - \widetilde{\mathcal{V}}(\nabla_\x w\cdot\n_\x)$ and thus $\widetilde{\mathcal{V}}(\phi) = \widetilde{\mathcal{V}}(\nabla_\x w\cdot\n_\x)$ and finally $\phi = \nabla_\x w\cdot\n_\x$. 
Overall, this yields that 
\begin{align*}
	\dual{\mathcal{V}^{-1}(\tfrac12-\mathcal{K}) (u|_\Sigma)}{u|_\Sigma}_\Sigma 
	= \dual{-\phi}{u|_\Sigma}_\Sigma 
	= \dual{-\nabla_\x w\cdot\n_\x}{w|_\Sigma}_\Sigma.
\end{align*}
Lemma~\ref{lem:Aubin-Lions-Simon} applied to $\u = (w,-\nabla_\x w)$, $\A={\rm Id}$, and $Q_R^{\rm ext}$ instead of $Q$ gives that 
\begin{align*}
	\dual{-\nabla_\x w\cdot\n_\x}{w|_\Sigma}_\Sigma 
	= \norm{\nabla_\x w}{L^2(Q_R^{\rm ext})}^2 + \tfrac12\norm{w(T,\cdot)}{L^2(\Omega_R^{\rm ext})}^2 - \dual{\nabla_\x w\cdot\n_{R,\x}}{w|_{\Sigma_R}}_{\Sigma_R}.
\end{align*}
Elementary analysis and the stronger radiation condition $w(\x)=\OO(|x|^{-d})$ and $\nabla_\x w(\x)\cdot \x = \OO(|x|^{-d})$ as $|x|\to\infty$ from Remark~\ref{rem:radiation} show that
$\dual{\nabla_\x w\cdot\n_{R,\x}}{w|_{\Sigma_R}}_{\Sigma_R} \to 0$ as $R\to\infty$;
see also the proof of \cite[Theorem~3.11]{costabel90}.
The monotone convergence theorem yields that
\begin{align*}
	\dual{-\nabla_\x w\cdot\n_\x}{w|_\Sigma}_\Sigma 
	= \norm{\nabla_\x w}{L^2(Q^{\rm ext})}^2 + \tfrac12\norm{w(T,\cdot)}{L^2(\Omega^{\rm ext})}^2.
\end{align*} 
We conclude that $\dual{-\nabla_\x w\cdot\n_\x}{w|_\Sigma}_\Sigma$ is indeed nonnegative and thus the proof. 
\end{proof}

In the next theorem, we finally show that, conversely, the first component of the unique solution 
$\u = (u,\ssigma) \in U$ of the variational formulation~\eqref{eq:transmission_weak4}, 
which itself is equivalent to the first-order system~\eqref{eq:transmission_interior} and the 
boundary integral equation~\eqref{eq:transmission_exterior}, and the associated exterior function  
\begin{align}\label{eq:2:interior2exterior}
	u^{\rm ext} := \widetilde{\mathcal{K}} (u|_\Sigma - u_D) + \widetilde{\mathcal{V}} (\ssigma\cdot\n_\x + \phi_N) \quad \text{on } Q^{\rm ext}
\end{align}
are weak solutions of the transmission problem~\eqref{eq:transmission}.
From this and Lemma~\ref{lem:coercivity on kernel 4} in combination with Theorem~\ref{thm:coercivity}, we immediately deduce unique solvability of~\eqref{eq:transmission}.

\begin{theorem}\label{thm:equivalence}
If $u$ and $u^{\rm ext}$ are weak solutions of \eqref{eq:transmission}, then $(u,\ssigma) = (u,-\A\nabla_\x u) \in U$ satisfies \eqref{eq:transmission_interior} and \eqref{eq:transmission_exterior}, and $u^{\rm ext}$ is given by~\eqref{eq:2:interior2exterior}. 
Conversely, if $\u = (u,\ssigma) \in U$ solves \eqref{eq:transmission_interior} and \eqref{eq:transmission_exterior}, then $u$ and $u^{\rm ext}$ given by~\eqref{eq:2:interior2exterior} are weak solutions of \eqref{eq:transmission}.
Since \eqref{eq:transmission_interior} and \eqref{eq:transmission_exterior} are equivalent to the
(for sufficiently large $\alpha>0$) uniquely solvable variational formulation~\eqref{eq:transmission_weak4}, this shows that there is indeed a unique pair $u$ and $u^{\rm ext}$ of weak solutions of~\eqref{eq:transmission}. 
\end{theorem}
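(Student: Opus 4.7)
The plan is to prove the two implications stated in the theorem and then combine them with unique solvability of the variational formulation~\eqref{eq:transmission_weak4}, which follows from Lemma~\ref{lem:coercivity on kernel 4}, Theorem~\ref{thm:coercivity}, and the equivalence already observed just after~\eqref{eq:transmission_weak4}.

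For the forward direction, suppose $u$ and $u^{\rm ext}$ are weak solutions of~\eqref{eq:transmission} and set $\ssigma:=-\A\nabla_\x u$. Unpacking $\mathcal{G}$, the interior PDE~\eqref{eq:heat_interior} gives $\partial_t u + \div_\x \ssigma = f$, the definition of $\ssigma$ gives $\A\nabla_\x u + \ssigma = \boldsymbol{0}$, and~\eqref{eq:initial_interior} gives $u(0,\cdot)=u_0$; together these are~\eqref{eq:transmission_interior}. Membership $(u,\ssigma)\in U$ follows from the regularity of the weak solution, the embedding~\eqref{eq:V2C0}, and the normal trace statement from Section~\ref{sec:sobolev}. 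For $u^{\rm ext}$, I would invoke the representation formula~\eqref{eq:exterior_solution} (proved earlier in this section), then substitute $u^{\rm ext}|_\Sigma = u|_\Sigma - u_D$ from~\eqref{eq:jump_dirichlet} and $\nabla_\x u^{\rm ext}\cdot\n_\x = \A\nabla_\x u \cdot\n_\x - \phi_N = -\ssigma\cdot\n_\x-\phi_N$ from~\eqref{eq:jump_neumann}, obtaining precisely~\eqref{eq:2:interior2exterior}. Taking the Dirichlet trace of this identity via~\eqref{eq:single_operator}--\eqref{eq:double_operator} and rearranging with~\eqref{eq:jump_dirichlet} yields~\eqref{eq:transmission_exterior}.

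For the backward direction, let $\u=(u,\ssigma)\in U$ solve~\eqref{eq:transmission_interior} and~\eqref{eq:transmission_exterior}, and define $u^{\rm ext}$ by~\eqref{eq:2:interior2exterior}. Decoding $\mathcal{G}\u=(f,\boldsymbol{0},u_0)$ returns~\eqref{eq:heat_interior} and~\eqref{eq:initial_interior}. Since $u^{\rm ext}$ is a combination of heat potentials, it automatically solves~\eqref{eq:heat_exterior}, satisfies~\eqref{eq:initial_exterior}, and fulfils the radiation condition~\eqref{eq:radiation} by~\eqref{eq:representation_radiation}. The Dirichlet jump~\eqref{eq:jump_dirichlet} is then an algebraic manipulation: take the exterior trace of~\eqref{eq:2:interior2exterior} via~\eqref{eq:single_operator}--\eqref{eq:double_operator}, rewrite the single-layer term using $(\tfrac12-\mathcal{K})(u|_\Sigma - u_D) = \mathcal{V}(\ssigma\cdot\n_\x + \phi_N)$ (which is just~\eqref{eq:transmission_exterior} rearranged), and the cancellation leaves exactly $u_D$.

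The step I expect to be the main obstacle is the Neumann jump~\eqref{eq:jump_neumann}, which is not visibly encoded in~\eqref{eq:transmission_exterior}. My plan is to exploit the intrinsic representation formula~\eqref{eq:exterior_solution} applied to the function $u^{\rm ext}$ just constructed (permissible by the previous paragraph), and compare it with the defining identity~\eqref{eq:2:interior2exterior}. Using the already-established Dirichlet jump $u^{\rm ext}|_\Sigma = u|_\Sigma - u_D$ to cancel the double-layer contributions, one obtains $\widetilde{\mathcal{V}}(\ssigma\cdot\n_\x + \phi_N + \nabla_\x u^{\rm ext}\cdot\n_\x) = 0$ on $Q^{\rm ext}$. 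Taking the trace on $\Sigma$ via~\eqref{eq:single_operator} and invoking injectivity of $\mathcal{V}$, which follows from the coercivity~\eqref{eq:V_coercive}, yields $\nabla_\x u^{\rm ext}\cdot\n_\x = -\ssigma\cdot\n_\x - \phi_N$, i.e.~\eqref{eq:jump_neumann}. The main technical care here is keeping track of regularity so that all trace identities are applied in the correct Sobolev spaces, but the mapping properties from Section~\ref{sec:sobolev} furnish exactly what is needed.

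Finally, Lemma~\ref{lem:coercivity on kernel 4} combined with Theorem~\ref{thm:coercivity} provides coercivity of $b_\alpha + c_4$ on $U$ for all sufficiently large $\alpha$, so that Lax--Milgram supplies a unique solution of~\eqref{eq:transmission_weak4}. Via the equivalence between~\eqref{eq:transmission_weak4} and the pair~\eqref{eq:transmission_interior}--\eqref{eq:transmission_exterior} noted immediately after~\eqref{eq:transmission_weak4}, together with the bijective correspondence with weak solutions of~\eqref{eq:transmission} established in the first two paragraphs, I conclude unique solvability of the transmission problem.
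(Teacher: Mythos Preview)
Your proposal is correct and follows essentially the same route as the paper's proof: the forward direction is the derivation already carried out in Section~\ref{sec:weak_formulation} and Section~\ref{sec:dirichlet_coupling}, and for the converse the key step is exactly the comparison of the defining representation~\eqref{eq:2:interior2exterior} with the intrinsic exterior representation formula, leading to $\mathcal{V}\bigl((\ssigma+\nabla_\x u^{\rm ext})\cdot\n_\x+\phi_N\bigr)=0$ and the Neumann jump via injectivity of~$\mathcal{V}$. The only cosmetic difference is that the paper performs the cancellation directly at the level of boundary traces on~$\Sigma$, whereas you first write $\widetilde{\mathcal{V}}(\cdot)=0$ on~$Q^{\rm ext}$ and then restrict.
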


\begin{proof}
We have already seen that for any weak solution $u$ of \eqref{eq:transmission}, the 
corresponding $(u,-\A\nabla_\x u)\in U$ satisfies \eqref{eq:transmission_interior} and \eqref{eq:transmission_exterior}.
Moreover, \eqref{eq:2:interior2exterior} is just~\eqref{eq:interior2exterior} with $-\A\nabla_\x u$ replaced by the second variable $\ssigma$. 

Conversely, any $(u,\ssigma)\in U$ satisfies $u\in L^2(I;H^1(\Omega)) \cap H^1(I; H^{-1}(\Omega))$ according to \eqref{eq:lemma_gs21}, 
and the first-order system~\eqref{eq:transmission_interior} implies the heat equation in the interior~\eqref{eq:heat_interior} with initial condition~\eqref{eq:initial_interior}. 
Moreover, $u^{\rm ext}$ from~\eqref{eq:2:interior2exterior} satisfies the homogeneous heat equation in the exterior~\eqref{eq:heat_exterior} with initial condition~\eqref{eq:initial_exterior}. 
By restricting $u^{\rm ext}$ to the lateral boundary $\Sigma$, we see that 
\begin{align}\label{eq:aux:equivalence}
	u^{\rm ext}|_\Sigma = ({\mathcal{K}}+1/2) (u|_\Sigma - u_D) + {\mathcal{V}} (\ssigma\cdot\n_\x + \phi_N),
\end{align}
and \eqref{eq:transmission_exterior} shows the Dirichlet jump condition~\eqref{eq:jump_dirichlet}, i.e., $u|_\Sigma - u^{\rm ext}|_\Sigma = u_D$. 
According to \eqref{eq:representation_radiation}, $u^{\rm ext}$ satisfies the radiation condition~\eqref{eq:radiation}, from which we derive with the representation formula on $Q^{\rm ext}_R$ that $u^{\rm ext} = \widetilde{\mathcal{K}} (u^{\rm ext}|_\Sigma) - \widetilde{\mathcal{V}} (\nabla_\x u^{\rm ext}\cdot\n_\x)$; see Section~\ref{sec:weak_formulation}. 
By restricting this representation to $\Sigma$ and exploiting $u|_\Sigma - u^{\rm ext}|_\Sigma = u_D$, we see that 
\begin{align*}
	u^{\rm ext}|_\Sigma = (\mathcal{K}+1/2) (u|_\Sigma - u_D) - \mathcal{V} (\nabla_\x u^{\rm ext}\cdot\n_\x).
\end{align*}
By subtracting the latter equation from \eqref{eq:aux:equivalence}, we see that $0= \mathcal{V} ((\ssigma + \nabla_\x u^{\rm ext})\cdot \n_\x + \phi_N)$. 
Since $\mathcal{V}$ is an isomorphism and $\ssigma = -\A\nabla_\x u$, we conclude the Neumann jump condition~\eqref{eq:jump_neumann}, i.e., $(\A\nabla_\x u - \nabla_\x u^{\rm ext}) \cdot \n_\x =  \phi_N$, and thus the proof. 
\end{proof}

\begin{remark}\label{rem:equivalence}
The equivalence of Theorem~\ref{thm:equivalence} follows analogously if the boundary integral equation~\eqref{eq:transmission_exterior} is replaced by the boundary integral equation~\eqref{eq:transmission_exterior2}, which results from the application of the exterior normal derivative to the representation~\eqref{eq:interior2exterior} of the exterior solution $u^{\rm ext}$. 
\end{remark}

\section{Numerical experiments}\label{sec:numerics}
We present numerical experiments on the two-dimensional spatial domain $\Omega=(0,1)^2$ with final time $T=1$. 
In all experiments, $\A$ is the identity matrix. We use
the coupling approach involving the Dirichlet trace of the representation formula from
Section~\ref{sec:dirichlet_coupling}. To discretize the trial space
\begin{align*}
  U=\boldsymbol{H}(\div;Q) \cap (L^2(I;H^1(\Omega))\times L^2(Q)^d),
\end{align*}
we consider a partition $I_{h_t}$ of $I$ into intervals of size $h_t$ and a conforming partition $\Omega_{h_{\x}}$ of $\Omega$ into shape-regular triangles of size $h_\x$.
In particular, this gives rise to a partition
\begin{align*}
  Q_{h} := \left\{ J\times K\mid J\in I_{h_t}, K\in \Omega_{h_{\x}} \right\}
\end{align*}
of $Q=I\times \Omega$ into prisms. 
In time, we use the space $S^1(I_{h_t})$ of globally continuous $I_{h_t}$-piecewise affine functions for
the first variable $u$ and the space $P^0(I_{h_t})$ of $I_{h_t}$-piecewise constant functions
for the second variable $\ssigma$.
In the spatial direction, we use the space $S^1(\Omega_{h_{\x}})$ of globally continuous $\Omega_{h_{\x}}$-piecewise
affine functions for the first variable $u$ and the space $RT^0(\Omega_{h_{\x}})$ of lowest-order Raviart--Thomas functions
for the second variable $\ssigma$.
As in~\cite{gs24}, we use the resulting tensor-product space
\begin{align*}
  U_{h} := [S^1(I_{h_t})\otimes S^1(\Omega_{h_{\x}})] \times [P^0(I_{h_t})\otimes RT^0(\Omega_{h_{\x}})] \subset U
\end{align*}
as conforming approximation space of $U$.
For given data $(f,u_0,u_D,\phi_N)$, the discrete solution $\u_h = (u_h,\ssigma_h)\in U_h$ is then given by
\begin{align*}
\begin{split}
	b_\alpha(\u_h,\v_h) + c_1(\u_h,\v_h) 
	= \alpha \big[\dual{f}{\div \v}_{L^2(Q)} + \dual{u_0}{v(0,\cdot)}_{L^2(\Omega)}\big] 
	\\ 
		 + \dual{-\mathcal{V} \phi_N + (\mathcal{K}-\tfrac12) u_D}{\ttau_h\cdot\n_\x}_\Sigma
\end{split}
\end{align*}
for all $\v_h = (v_h,\ttau_h)\in U_h$.
Lemma~\ref{lem:coercivity on kernel} and Theorem~\ref{thm:coercivity} yield coercivity of the bilinear form on the left-hand side provided that $\alpha$
is chosen sufficiently large, which guarantees unique solvability of this discrete variational formulation.
In our experiments, we choose $\alpha=1$. The numerical eigenvalue study provided in Section~\ref{sec:dependence_alpha}
indeed suggests that this induces a coercive bilinear form. 

More precisely, we consider the following concrete sequence of uniform meshes on $Q=(0,1)^3$: For given $\ell\in\mathbb{N}$,
we first split the spatial domain $\Omega=(0,1)^2$ in each direction into $2^\ell-1$ uniform intervals.
The resulting squares are then bisected in northwest-southeast direction
to obtain a conforming shape-regular triangulation $\Omega_{h_{\x}}$. Finally, the time interval $I=(0,1)$ is split into $2^\ell-1$ uniform intervals, forming $I_{h_t}$.
With the number $N_h=\dim (S^1(I_k)\otimes S^1(\Omega_h))$ of nodes in $Q_{h}$ and the number of overall degrees of freedom $M_h:=\dim U_h$, this yields that
\begin{align*}
  h:=h_t\eqsim h_{\x} \eqsim N_h^{-1/3} \eqsim M_h^{-1/3}.
\end{align*}

For $\alpha$ sufficiently large such that $b_\alpha+c_1$ is coercive, the C\'ea lemma  implies quasi-optimality of the method.
That is, for the exact solution $(u,\ssigma)\in U$ and numerical approximation $(u_h,\ssigma_h)\in U_h$,
it holds that
\begin{align*}
  \| (u,\ssigma)-(u_h,\ssigma_h) \|_U \eqsim \min_{(v_h,\ttau_h) \in U_h} \| (u,\ssigma)-(v_h,\ttau_h) \|_U.
\end{align*}
For smooth exact solutions,~\cite{gs24} shows that
\begin{align*}
  \min_{(v_h,\ttau_h)\in U_h} \| (u,\ssigma)-(v_h,\ttau_h) \|_U =\mathcal{O}(h).
\end{align*}

The experiments are implemented in \texttt{MATLAB}. For the calculation of the discrete boundary
integral operators, we have analytically integrated integrals in time, cf.~\cite{costabel90,reinarz15,gv22},
which can be done up to evaluations of exponential integrals. The latter can be evaluated directly in \texttt{MATLAB}.
The remaining integrals in space are calculated either by tensor Gauss--Legendre rules or adapted quadrature
rules from~\cite{smith00} in case of (logarithmic) singularities.
All systems are solved by the \texttt{MATLAB} backslash operator without preconditioning.
\subsection{Smooth solutions vanishing in the exterior domain}\label{sec:exterior0}
We prescribe an exact interior and exterior solution as
\begin{align*}
  u(t,\x) &= G(t,\x-\x^{\rm ext}),\\
  u^{\rm ext}(t,\x) &= 0,
\end{align*}
with $\x^{\rm ext} = (-0.5,-0.5)\in\Omega^{\rm ext}$, and compute the data thereof. In particular, $f=0$, $u_0=0$, and $u_D$ and $\phi_N$ are smooth.
On a uniform sequence of meshes with $N_h$ nodes, we expect
\begin{align*}
  \| (u,\ssigma)-(u_h,\ssigma_h) \|_U^2
  = \OO(h^2) = \OO(N_h^{-2/3}).
\end{align*}
In Figure~\ref{fig:ex1}, we plot the (squared) errors
\begin{align*}
  \| \div(\u-\u_h) \|_{L^2(Q)}^2, \qquad
  \| u - u_h \|_{L^2(Q)}^2,\qquad
  \| \nabla_\x (u - u_h) \|_{L^2(Q)}^2,\qquad
  \| \ssigma - \ssigma_h \|_{L^2(Q)}^2.
\end{align*}
It is reasonable to expect higher rates for the lower-order term
$\| u - u_h \|_{L^2(Q)}^2=\OO(h^4) = \OO(N_h^{-4/3})$
and optimal order $\OO(N_h^{-2/3})$ for the three remaining error terms.
The rates are confirmed in Figure~\ref{fig:ex1}.
\begin{figure}[h]
  \begin{center}
      \begin{tikzpicture}
\begin{loglogaxis}[
width=0.65\textwidth,
cycle list/Dark2-6,
cycle multiindex* list={
mark list*\nextlist
Dark2-6\nextlist},
every axis plot/.append style={ultra thick},
xlabel={number of nodes $N_h$},
grid=major,
legend entries={
  \small $\| \div(\u-\u_h) \|_{L^2(Q)}^2$,
  \small $\| u - u_h \|_{L^2(Q)}^2$,
  \small $\| \nabla_\x(u - u_h) \|_{L^2(Q)}^2$,
\small $\| \ssigma - \ssigma_h \|_{L^2(Q)}^2$},
legend pos=south west,
]
\addplot table [x=dofsh,y=e_usigmah_Hdiv] {numerics/ex1.dat};
\addplot table [x=dofsh,y=e_uh_L2L2] {numerics/ex1.dat};
\addplot table [x=dofsh,y=e_uh_L2H1] {numerics/ex1.dat};
\addplot table [x=dofsh,y=e_sigmah_L2] {numerics/ex1.dat};

\logLogSlopeTriangle{0.9}{0.6}{0.6}{2/3}{black}{{\small $2/3$}};
\logLogSlopeTriangle{0.9}{0.3}{0.15}{4/3}{black}{{\small $4/3$}};
\end{loglogaxis}
\end{tikzpicture}
  \end{center}
  \caption{Smooth solution vanishing in the exterior domain.}
  \label{fig:ex1}
\end{figure}
\subsection{Smooth solutions vanishing in the interior domain}
We prescribe an exact interior and exterior solution as
\begin{align*}
  u(t,\x) &= 0,\\
  u^{\rm ext}(t,\x) &= G(t,\x-\x^{\rm int}),
\end{align*}
with $\x^{\rm int} = (0.5,0.5)\in\Omega$, and compute the data thereof.
In particular, $f=0$, $u_0=0$, and $u_D$ and $\phi_N$ are smooth.
In Figure~\ref{fig:ex2}, we plot the same errors as in Section~\ref{sec:exterior0}, expecting similar rates.
\begin{figure}[h]
  \begin{center}
      \begin{tikzpicture}
\begin{loglogaxis}[
width=0.65\textwidth,
cycle list/Dark2-6,
cycle multiindex* list={
mark list*\nextlist
Dark2-6\nextlist},
every axis plot/.append style={ultra thick},
xlabel={number of nodes $N_h$},
grid=major,
legend entries={
  \small $\| \div(\u-\u_h) \|_{L^2(Q)}^2$,
  \small $\| u - u_h \|_{L^2(Q)}^2$,
  \small $\| \nabla_\x(u - u_h) \|_{L^2(Q)}^2$,
  \small $\| \ssigma - \ssigma_h \|_{L^2(Q)}^2$},
legend pos=south west,
]
\addplot table [x=dofsh,y=e_usigmah_Hdiv] {numerics/ex2.dat};
\addplot table [x=dofsh,y=e_uh_L2L2] {numerics/ex2.dat};
\addplot table [x=dofsh,y=e_uh_L2H1] {numerics/ex2.dat};
\addplot table [x=dofsh,y=e_sigmah_L2] {numerics/ex2.dat};

\logLogSlopeTriangle{0.9}{0.6}{0.68}{2/3}{black}{{\small $2/3$}};
\logLogSlopeTriangleBelow{0.6}{0.3}{0.4}{4/3}{black}{{\small $4/3$}};
\end{loglogaxis}
\end{tikzpicture}
  \end{center}
  \caption{Smooth solution vanishing in the interior domain.}
  \label{fig:ex2}
\end{figure}
\subsection{Smooth solutions}
We prescribe an exact interior and exterior solution as
\begin{align*}
  u(t,\x) &= x_1^2 + t,\\
  u^{\rm ext}(t,\x) &= G(t,\x-\x^{\rm int}),
\end{align*}
with $\x^{\rm int} = (0.5,0.5)\in\Omega$, and compute the data thereof. In particular,
$f=-1$, $u_0(\x)=x_1^2$, and $u_D$ and $\phi_N$ are smooth.
In Figure~\ref{fig:ex3}, we plot the same errors as in {Section~\ref{sec:exterior0}, expecting similar rates.
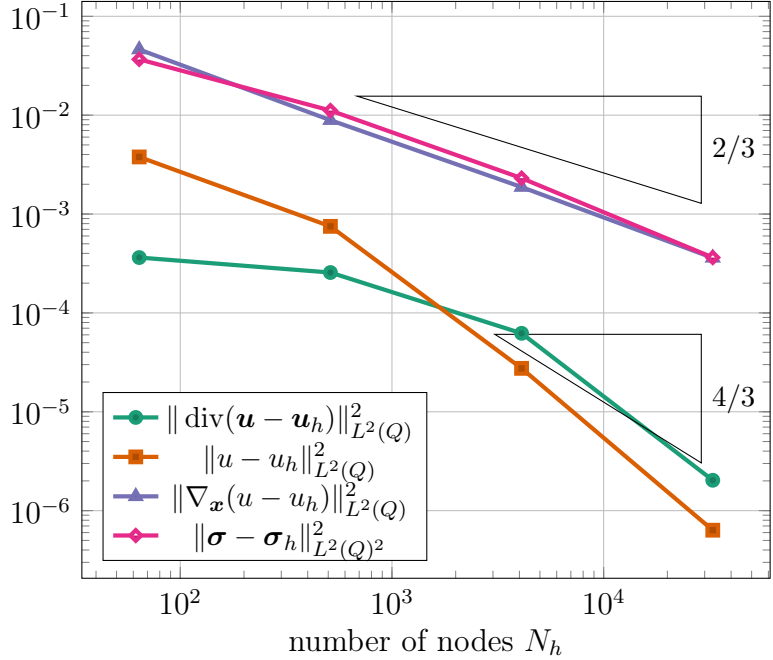
\begin{figure}[h]
  \begin{center}
      \begin{tikzpicture}
\begin{loglogaxis}[
width=0.65\textwidth,
cycle list/Dark2-6,
cycle multiindex* list={
mark list*\nextlist
Dark2-6\nextlist},
every axis plot/.append style={ultra thick},
xlabel={number of nodes $N_h$},
grid=major,
legend entries={
  \small $\| \div(\u-\u_h) \|_{L^2(Q)}^2$,
  \small $\| u - u_h \|_{L^2(Q)}^2$,
  \small $\| \nabla_\x(u - u_h) \|_{L^2(Q)}^2$,
  \small $\| \ssigma - \ssigma_h \|_{L^2(Q)^2}^2$},
legend pos=south west,
]
\addplot table [x=dofsh,y=e_usigmah_Hdiv] {numerics/ex3.dat};
\addplot table [x=dofsh,y=e_uh_L2L2] {numerics/ex3.dat};
\addplot table [x=dofsh,y=e_uh_L2H1] {numerics/ex3.dat};
\addplot table [x=dofsh,y=e_sigmah_L2] {numerics/ex3.dat};

\logLogSlopeTriangle{0.9}{0.5}{0.65}{2/3}{black}{{\small $2/3$}};
\logLogSlopeTriangle{0.9}{0.3}{0.2}{4/3}{black}{{\small $4/3$}};
\end{loglogaxis}
\end{tikzpicture}
  \end{center}
  \caption{Smooth solution.}
  \label{fig:ex3}
\end{figure}

\subsection{Nonsmooth solutions}
We use the data $f,u_D,\phi_N=0$ and $u_0=1$.
The pair of exact solutions $(u,u^{\rm ext})$ is unknown. Since the initial conditions do not match with
the Dirichlet transmission conditions, we expect singularities in the solutions leading
to sub-optimal convergence rates. In Figure~\ref{fig:ex4}, we plot
\begin{align*}
	\| \mathcal{G}(\u-\u_h) \|_{L}^2 &= \| (f,0,u_0) - \mathcal{G}\u_h \|_L^2 
	\\
  	&= \norm{f - \div \u_h}{L^2(Q)}^2 + \norm{\A\nabla_\x u_h + \ssigma_h}{L^2(Q)}^2 + \norm{u_0 - u_h(0,\cdot)}{L^2(\Omega)}^2,
\end{align*}
which constitutes a computable lower bound of the (squared) error $\norm{\u-\u_h}{U}^2$.
We observe that this error measure converges approximately with rate
$N_h^{-1/5}$.
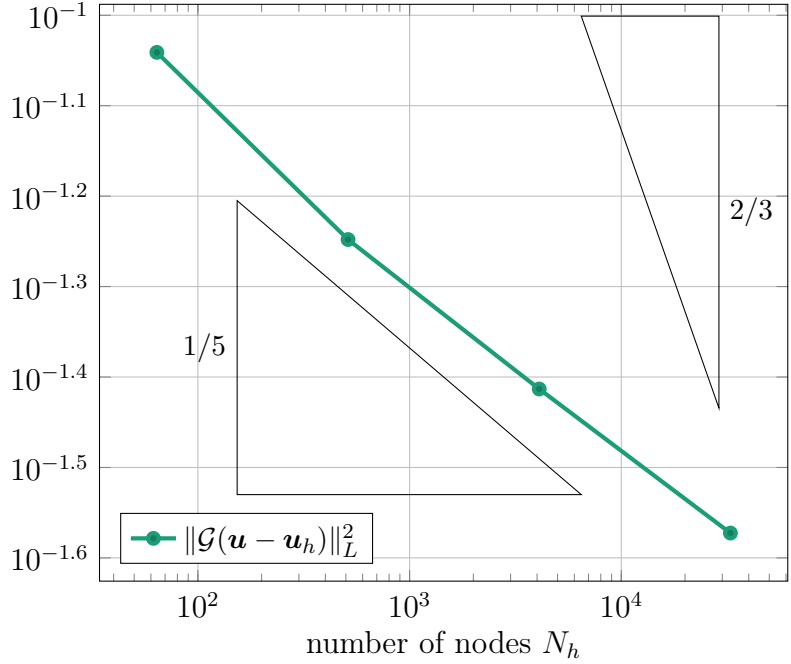
\begin{figure}[h]
  \begin{center}
      \begin{tikzpicture}
\begin{loglogaxis}[
width=0.65\textwidth,
cycle list/Dark2-6,
cycle multiindex* list={
mark list*\nextlist
Dark2-6\nextlist},
every axis plot/.append style={ultra thick},
xlabel={number of nodes $N_h$},
grid=major,
legend entries={
  \small $\| \mathcal{G}(\u-\u_h) \|_{L}^2$},
legend pos=south west,
]
\addplot table [x=dofsh,y=e_eq] {numerics/ex4.dat};

\logLogSlopeTriangle{0.9}{0.2}{0.3}{2/3}{black}{{\small $2/3$}};
\logLogSlopeTriangleBelow{0.7}{0.5}{0.15}{2/10}{black}{{\small $1/5$}};
\end{loglogaxis}
\end{tikzpicture}
  \end{center}
  \caption{Nonsmooth solution.}
  \label{fig:ex4}
\end{figure}

\subsection{Dependence of coercivity on $\boldsymbol{\alpha}$} \label{sec:dependence_alpha}
In this section, we investigate the influence of $\alpha$ on the coercivity of the bilinear form $b_\alpha+c_1$.
If $\alpha$ is sufficiently large, Lemma~\ref{lem:coercivity on kernel} and Theorem~\ref{thm:coercivity} guarantee the existence of a constant $\const{coer}(\alpha)>0$ such that
{\begin{align}\label{eq:coercivity1}
	b_\alpha(\v,\v) + c_1(\v,\v) \ge \const{coer}(\alpha) \norm{\v}{U}^2 \quad \text{for all }\v\in U.
\end{align}
For the discrete subspace $U_h$ of dimension $M_h$, 
let $\textbf{A}_{\alpha,h}\in \R^{M_h\times M_h}$ be the system matrix of the bilinear form
$b_\alpha+c_1$, and $\textbf{U}_h\in \R^{M_h\times M_h}$ be the (symmetric and positive definite) system matrix of the inner product on $U$ with respect to some fixed basis of $U_h$.
With the symmetric part $\textbf{A}_{\alpha,h}^{\rm s} := (\textbf{A}_{\alpha,h} + \textbf{A}_{\alpha,h}^\top)/2$  of $\textbf{A}_{\alpha,h}$, it holds that
\begin{align*}
\inf_{\v_h \in U_h} \frac{b_\alpha(\v_h,\v_h)+c_1(\v_h,\v_h)}{\norm{\v_h}{U}^2}
  =\inf_{\textbf{y}\in\R^{M_h}}\frac{\textbf{y}^\top \textbf{A}_{\alpha,h}\textbf{y}}{\textbf{y}^\top \textbf{U}_h \textbf{y}}
  = \inf_{\textbf{y}\in\R^{M_h}}\frac{\textbf{y}^\top \textbf{A}_{\alpha,h}^{\rm s} \textbf{y}}{\textbf{y}^\top \textbf{U}_h \textbf{y}}
  = \inf_{\textbf{y}\in\R^{M_h}} \frac{\textbf{y}^\top \textbf{U}_h^{-1/2} \textbf{A}_{\alpha,h}^{\rm s} \textbf{U}_{h}^{-1/2}\textbf{y}}{\textbf{y}^\top \textbf{y}}.
\end{align*}
As $\textbf{U}_h^{-1/2} \textbf{A}_{\alpha,h}^{\rm s} \textbf{U}_{h}^{-1/2}$ is symmetric, the infimum on the
right-hand side is equal to the smallest eigenvalue of this matrix. Hence, we consider the equivalent generalized eigenvalue problem
\begin{align*}
  \textbf{A}_{\alpha,h}^{\rm s} \textbf{y} = \lambda \textbf{U}_{h} \textbf{y}.
\end{align*}
In Table~\ref{tab:coercivity}, we provide the smallest eigenvalue 
in function
of $\alpha$ (columns) and meshes of different size (rows).
\begin{table}[h]
  \begin{center}
  \begin{tabular}{r | r | r | r | r | r | r | r | r| r | r |}
    $N_h\textbackslash \alpha$ & 10000 & 1000 & 100 & 10 & 1 & 0.1 & 0.01 & 0.001 & 0.0001\\
    \hline\hline
    64 & 89.1751 & 10.0269 &   1.6216 &   0.6109 &   0.0593 &  -0.7387 &  -1.1230 &  -1.1641 &  -1.1682\\
    512 & 22.3626 &  3.0514 &   0.7684 &   0.3073 &   0.0223 &  -0.7546 &  -1.1696 &  -1.2150 &  -1.2195\\
    4096 & 6.4107 & 1.1080 &   0.5000 &   0.2201 &   0.0124 &  -0.7617 &  -1.1906 &  -1.2378 &  -1.2426\\
    32768 &   NaN & 0.6609 &   0.2917 &   0.1907 &   0.0091 &  -0.7664 &  -1.2072 &  -1.2561 &  -1.2611
  \end{tabular}
  \end{center}
  \caption{Minimum of $\frac{b_\alpha(\v_h,\v_h)+c_1(\v_h,\v_h)}{\norm{\v_h}{U}^2}$ over all $\v_h\in U_h$.}
  \label{tab:coercivity}
\end{table}
We see that for $\alpha\in\left\{10000, 1000, 100,10,1 \right\}$, all eigenvalues are positive and appear to be bounded from below by
some positive constant.
For $\alpha\in \left\{ 0.1, 0.01, 0.001, 0.0001 \right\}$, the smallest eigenvalue is already negative.
\section*{Acknowledgement}
TF acknowledges funding by ANID Chile through FONDECYT project 1210391 \& project 1250070.
GG acknowledges funding by the Deutsche Forschungsgemeinschaft (DFG, German Research Foundation)
under Germany's Excellence Strategy – EXC-2047/1 – 390685813.
MK acknowledges funding by ANID Chile through FONDECYT project 1210579 \& project 1250604.


\bibliographystyle{siamplain}
\bibliography{literature}

\end{document}